\documentclass[11pt,a4paper]{article}
\usepackage[margin=1.0truein]{geometry}

\usepackage{setspace}   
\usepackage{pdflscape}  
\usepackage{microtype}  
\usepackage{xcolor}     

\usepackage{amsmath,amssymb,amsthm} 
\usepackage{mathtools}              
\usepackage{mathrsfs}               
\usepackage{latexsym}               
\usepackage{bm}                     

\usepackage{wrapfig}                          
\usepackage[subrefformat=parens]{subcaption} 

\usepackage{array,booktabs,multirow,diagbox,threeparttable,siunitx,tabularx}
\newcolumntype{P}[1]{>{\centering\arraybackslash}p{#1}} 
\newcolumntype{C}{>{\centering\arraybackslash}X}        
\usepackage{pbox} 

\usepackage{algorithm}
\usepackage[noend]{algpseudocode}

\algnewcommand{\Break}{\textbf{break}}
\algnewcommand{\algorithmicgoto}{\textbf{go to} Line}
\algnewcommand{\Goto}[1]{\algorithmicgoto~\ref{#1}}
\algblockdefx{MRepeat}{EndRepeat}{\textbf{repeat}}{}
\algnotext{EndRepeat}

\usepackage[sort&compress,numbers,square]{natbib}

\usepackage{thmtools,thm-restate}

\usepackage{url}       
\usepackage{multicol}  
\usepackage{wasysym}   
\usepackage{adjustbox} 
\usepackage{xparse}    
\usepackage{authblk}   
\usepackage{xspace}    
\usepackage{enumitem}  

\usepackage{hyperref}
\usepackage{cleveref}

\expandafter\def\csname ver@etex.sty\endcsname{3000/12/31}

\usepackage{autonum} 

\makeatletter
\autonum@generatePatchedReferenceCSL{Cref} 
\makeatother

\hypersetup{
  colorlinks,
  linkcolor={green!35!black},
  citecolor={green!35!black},
  urlcolor={blue!50!black}
}

\declaretheoremstyle[
  shaded={bgcolor=gray!15},
]{thmsty}

\declaretheorem[
  name=Theorem,
  refname={Theorem,Theorems},
  style=thmsty,
]{theorem}

\declaretheorem[
  name=Corollary,
  refname={Corollary,Corollaries},
  style=thmsty,
]{corollary}

\declaretheorem[
  name=Lemma,
  refname={Lemma,Lemmas},
  style=thmsty,
]{lemma}

\declaretheorem[
  name=Assumption,
  refname={Assumption,Assumptions},
  style=thmsty,
]{assumption}

\crefname{algorithm}{Algorithm}{Algorithms}
\crefname{line}{Line}{Lines}
\crefname{section}{Section}{Sections}
\crefname{appendix}{Appendix}{Appendices}
\crefname{table}{Table}{Tables}
\crefname{figure}{Figure}{Figures}

\crefname{equation}{}{}
\Crefname{equation}{Eq.}{Eqs.}

\setlist[itemize]{
  topsep=0.1\baselineskip,
  itemsep=0\baselineskip,
  leftmargin=1.5em,
}

\setlist[enumerate]{
  font=\upshape,
  label=(\alph*),
  ref=(\alph*),
  topsep=0.1\baselineskip,
  itemsep=0\baselineskip,
  leftmargin=2em,
}

\newlist{enuminasm}{enumerate}{1}
\setlist[enuminasm]{
  font=\upshape,
  label=(\alph*),
  ref=\theassumption(\alph*),
  topsep=0.4\baselineskip,
  itemsep=0\baselineskip,
  leftmargin=2em,
}
\crefalias{enuminasmi}{assumption}

\newlist{enuminthm}{enumerate}{1}
\setlist[enuminthm]{
  font=\upshape,
  label=(\alph*),
  ref=\thetheorem(\alph*),
  topsep=0.4\baselineskip,
  itemsep=0\baselineskip,
  leftmargin=2em,
}
\crefalias{enuminthmi}{theorem}

\newlist{enuminlem}{enumerate}{1}
\setlist[enuminlem]{
  font=\upshape,
  label=(\alph*),
  ref=\thelemma(\alph*),
  topsep=0.4\baselineskip,
  itemsep=0\baselineskip,
  leftmargin=2em,
}
\crefalias{enuminlemi}{lemma}

\newlist{enuminprop}{enumerate}{1}
\setlist[enuminprop]{
  font=\upshape,
  label=(\alph*),
  ref=\theproposition(\alph*),
  topsep=0.4\baselineskip,
  itemsep=0\baselineskip,
  leftmargin=2em,
}
\crefalias{enuminpropi}{proposition}

\newlist{enumincond}{enumerate}{1}
\setlist[enumincond]{
  font=\upshape,
  label=(\alph*),
  ref=\thecondition(\alph*),
  topsep=0.4\baselineskip,
  itemsep=0\baselineskip,
  leftmargin=2em,
}
\crefalias{enumincondi}{condition}

\DeclareMathOperator*{\argmin}{argmin}

\DeclareMathOperator{\EE}{\mathbb{E}}

\DeclarePairedDelimiter\ceil{\lceil}{\rceil}

\DeclarePairedDelimiterX\inner[2]{\langle}{\rangle}{{#1},{#2}}
\DeclarePairedDelimiter\abs{\lvert}{\rvert}
\DeclarePairedDelimiter\norm{\lVert}{\rVert}

\providecommand\given{}
\newcommand\SetSymbol[1][]{%
  \nonscript\:#1\vert
  \allowbreak
  \nonscript\:
  \mathopen{}%
}

\DeclarePairedDelimiter\prn{(}{)}
\DeclarePairedDelimiter\brk{[}{]}
\DeclarePairedDelimiter\set{\lbrace}{\rbrace}

\DeclarePairedDelimiterX\Prn[2]{(}{)}{%
  \renewcommand\given{\SetSymbol[\delimsize]}%
  #1\given#2%
}

\DeclarePairedDelimiterX\Brk[2]{[}{]}{%
  \renewcommand\given{\SetSymbol[\delimsize]}%
  #1\given#2%
}

\DeclarePairedDelimiterX\Set[2]{\lbrace}{\rbrace}{%
  \renewcommand\given{\SetSymbol[\delimsize]}%
  #1\given#2%
}

\newcommand{\N}{\mathbb N}

\newcommand{\R}{\mathbb R}

\newcommand{\0}{\mathbf 0}

\renewcommand{\O}{\mathrm{O}}

\renewcommand{\epsilon}{\varepsilon}

\NewDocumentCommand{\ex}{s o d<> m}{%
  \IfBooleanTF{#1}{%
    \IfValueTF{#3}{\EE_{#3}\brk*{#4}}{\EE\brk*{#4}}%
  }{%
    \IfValueTF{#3}{%
      \IfNoValueTF{#2}{\EE_{#3}\brk{#4}}{\EE_{#3}\brk[#2]{#4}}%
    }{%
      \IfNoValueTF{#2}{\EE\brk{#4}}{\EE\brk[#2]{#4}}%
    }%
  }%
}

\NewDocumentCommand{\cex}{s o d<> m m}{%
  \IfBooleanTF{#1}{%
    \IfValueTF{#3}{\EE_{#3}\Brk*{#4}{#5}}{\EE\Brk*{#4}{#5}}%
  }{%
    \IfValueTF{#3}{%
      \IfNoValueTF{#2}{\EE_{#3}\Brk{#4}{#5}}{\EE_{#3}\Brk[#2]{#4}{#5}}%
    }{%
      \IfNoValueTF{#2}{\EE\Brk{#4}{#5}}{\EE\Brk[#2]{#4}{#5}}%
    }%
  }%
}

\usepackage{CJKutf8}

\newcommand{\by}[2][]{\text{\pbox[c]{\textwidth}{(by \pbox[t]{\textwidth}{\phantom{}#2)#1}}}}
\newcommand{\email}[1]{\href{mailto:#1}{\nolinkurl{#1}}}

\date{\today\vspace{-1\baselineskip}}

\author[1]{Naoki Marumo\thanks{E-mail: \email{marumo@mist.i.u-tokyo.ac.jp}}}
\author[1,2]{Akiko Takeda}

\affil[1]{\normalsize Graduate School of Information Science and Technology, University of Tokyo, Tokyo, Japan}
\affil[2]{\normalsize RIKEN Center for Advanced Intelligence Project, Tokyo, Japan}

\title{A General Recipe for Parameter-Free Nonconvex Optimization via Higher-Order Regularization}

\begin{document}
\maketitle

\begin{abstract}
  We develop a systematic framework for constructing parameter-free algorithms for smooth nonconvex optimization.
  The framework is based on higher-order regularization: each step is computed from a regularized local model whose regularization exponent exceeds the order of the model error.
  This design makes the resulting method robust to misspecification of the regularization parameter and yields complexity bounds without backtracking or other acceptance tests.
  We apply the framework to gradient descent, Newton's method, the Gauss--Newton method, stochastic gradient descent, and PAGE.
  Without prior knowledge of problem-dependent parameters, the resulting algorithms achieve complexity bounds with optimal or best-known dependence on the target accuracy.
  When the problem-dependent parameters are known up to constant factors, suitable tuning also recovers the optimal or best-known dependence on those parameters.
\end{abstract}

\smallskip
\begin{description}[leftmargin=!,labelwidth=\widthof{\bfseries Keywords:}]
  \item[Keywords:]
  Nonconvex optimization,
  parameter-free algorithms,
  model-based methods,
  oracle complexity,
  stochastic optimization,
  variance reduction
  \item[MSC2020:]
  90C26, 90C30, 65K05, 49M15
\end{description}
\smallskip

\section{Introduction}
We consider the nonconvex optimization problem
\begin{align}
  \min_{x \in \R^d}\ f(x),
\end{align}
where $f \colon \R^d \to \R$ is smooth and bounded below.
For such problems, oracle-based algorithms, which access $f$ only through oracles such as evaluations of $f$, $\nabla f$, and $\nabla^2 f$, have been extensively studied.
A central performance measure is the \emph{oracle complexity}~\citep{nemirovsky1983problem}, or evaluation complexity~\citep{cartis2022evaluation}, defined as the number of oracle calls required to find an $\epsilon$-stationary point, i.e., a point $x$ satisfying $\norm*{\nabla f(x)} \leq \epsilon$.
Under standard assumptions, gradient descent achieves the complexity $\O(\epsilon^{-2})$ \citep[Section 1.2.3]{nesterov2004introductory}, while the cubic-regularized Newton method achieves $\O(\epsilon^{-3/2})$ \citep{nesterov2006cubic}.
One straightforward way to obtain these complexity bounds is to set algorithmic parameters (e.g., stepsizes and regularization parameters) in terms of problem-dependent parameters (e.g., the Lipschitz constants of $\nabla f$ and $\nabla^2 f$).

\emph{Parameter-free algorithms} seek to obtain these complexity bounds without requiring problem-dependent parameters as input.
This matters in practice because such parameters are often unavailable a priori.
A classical way to design parameter-free algorithms is to adapt algorithmic parameters until a trial point passes an acceptance test, such as a sufficient-decrease condition on the objective value.
This idea underlies line-search, trust-region, and adaptive-regularization methods~\citep{nocedal2006numerical,conn2000trust,cartis2011adaptive,cartis2011adaptive2}.
Because such tests can be attached to different local models, this approach applies broadly to model-based algorithms, including Newton's method and the Gauss--Newton method.
However, evaluating these tests can be costly, unreliable, or unavailable, especially in stochastic optimization.

Parameter-free techniques that avoid acceptance tests have mostly been developed for first-order methods.
One line of work estimates local smoothness from quantities computed during the run, such as gradient differences or estimates of model error, and chooses the stepsize accordingly~\citep{malitsky2020adaptive,ye2025simple,yagishita2025simple}.
Another line chooses stepsizes using the history of gradient norms, typically scaling the stepsize inversely with an accumulated gradient norm~\citep{li2019convergence,attia2023sgd,khaled2024tuning,ward2020adagrad,faw2022power,gratton2026complexity}.
Some of these methods are compatible with stochastic settings; in particular, AdaGrad-Norm~\citep{ward2020adagrad,faw2022power} has been successfully analyzed in stochastic nonconvex optimization.
However, they remain essentially first-order, and it is less clear how to extend them systematically to model-based methods that use Hessian, Jacobian, or other local information.

Thus, existing parameter-free mechanisms exhibit a tradeoff between two desirable properties: broad applicability to model-based algorithms and compatibility with stochastic optimization.
This motivates the search for a mechanism that combines these two advantages.

\paragraph{Contributions.}
We develop a systematic framework for constructing parameter-free algorithms that combines these two advantages.
The key idea is higher-order regularization.
Subproblem-based methods often include a regularization term to control the step computed from a local model.
Instead of matching the regularization exponent to the order of the local-model error, we choose it to be strictly larger.
This higher-order regularization makes the method robust to misspecification of the regularization parameter, a key difficulty when problem-dependent parameters are unknown.
This allows us to fix a regularization-parameter schedule in advance.

The resulting framework has the following features.
\begin{itemize}
  \item
  The framework applies broadly to algorithms that compute the next iterate by solving a subproblem constructed from a local approximation of $f$ around the current iterate.
  This class covers a wide range of methods, including gradient descent, Newton's method, and the Gauss--Newton method.
  \item
  The resulting parameter-free algorithms involve no backtracking or other acceptance tests.
  Thus, they avoid the additional cost of validating trial points, which makes them amenable to stochastic settings.
\end{itemize}

To demonstrate the generality of this approach, we apply it to several deterministic and stochastic algorithms.
Specifically, we analyze higher-order regularized variants of gradient descent, Newton's method, the Gauss--Newton method, stochastic gradient descent (SGD)~\citep{robbins1951stochastic,ghadimi2013stochastic}, and PAGE~\citep{li2021page}.
These instances lead to the following guarantees.
\begin{itemize}
  \item
  Without prior knowledge of problem-dependent parameters, they achieve the optimal or best-known dependence on $\epsilon$ in their complexity bounds.
  \item
  If the problem-dependent parameters are known up to constant factors, then choosing the algorithmic parameters accordingly recovers the optimal or best-known dependence on these parameters as well as on $\epsilon$.
\end{itemize}

A related idea can be found in \citep{marumo2025parameter}, which studies a parameter-free quasi-Newton-type method with quartic regularization.
The present work can be viewed as extending this idea to a general model-based framework and applying it systematically to deterministic and stochastic algorithms.

\paragraph{Organization.}
The rest of this paper is organized as follows.
\Cref{sec:recipe} presents the general higher-order-regularization recipe for constructing parameter-free algorithms, using gradient descent as a guiding example and deriving its complexity bounds.
In \cref{sec:newton,sec:gn,sec:sgd,sec:page}, we instantiate this recipe for four additional algorithms and establish their complexity bounds.
Each of these sections also reviews related work on the corresponding algorithm.
Finally, \cref{sec:conclusion} concludes the paper.

\paragraph{Notation.}
Let $\N \coloneqq \set{0, 1, 2, \ldots}$.
Let $\R^d$ be the $d$-dimensional Euclidean space equipped with the standard inner product $\inner*{\cdot}{\cdot}$ and the induced norm $\norm*{\cdot}$.
We also use $\norm*{\cdot}$ to denote the operator norm of a matrix.
Throughout the paper, $(x_k)_{k \in \N}$ denotes the sequence of iterates generated by the algorithm under consideration.
Define
\begin{align}
  \Delta \coloneqq f(x_0) - \inf_{x \in \R^d} f(x),
  \label{eq:def_delta}
\end{align}
which is finite because $f$ is bounded below.

\section{Higher-order regularization recipe}
\label{sec:recipe}
This section explains our recipe for constructing parameter-free algorithms.
We use gradient descent as a guiding example to illustrate the common analysis pattern for the resulting algorithms.

Let $(x_k)_{k \in \N}$ be the sequence of iterates.
We focus on model-based methods that compute the step $s_k \coloneqq x_{k+1} - x_k$ by solving the subproblem
\begin{align}
  s_k
  =
  \argmin_{s \in \R^d}
  \set*{
    \tilde m_k(s)
    + \frac{\lambda_k}{p} \norm*{s}^p
  },
  \label{eq:subproblem_hor}
\end{align}
where $p > 1$, $\lambda_k > 0$, and $\tilde m_k(s)$ is a local approximation of $f(x_k + s)$.
Typically, the exponent $p$ is chosen to match the order of the model error $\abs*{\tilde m_k(s) - f(x_k + s)}$.
Our recipe instead uses an exponent strictly larger than that order.
This higher-order regularization provides a simple principle for making model-based methods parameter-free.

The complexity analyses for the resulting methods follow a common pattern.
Because this pattern is easiest to understand in a concrete setting, we demonstrate it for gradient descent.
We then summarize the analysis pattern for model-based methods of the form \cref{eq:subproblem_hor}.

\subsection{Preliminaries: Useful inequalities}
Before turning to the gradient-descent example, we collect several inequalities that will be used repeatedly in the complexity analyses.

The first inequality is an elementary upper bound for a difference of two power functions.
\begin{lemma}
  \label{lem:power_diff_bound}
  For all $a, b > 0$, $p > q \geq 1$, and $t \geq 0$, we have
  \begin{align}
    b t^q - a t^p
    \leq
    b \prn*{\frac{b}{a}}^{\frac{q}{p-q}}.
  \end{align}
\end{lemma}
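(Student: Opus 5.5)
The plan is to bound $g(t) \coloneqq b t^q - a t^p$ by a short case split on whether $t^{p-q}$ is small or large relative to $b/a$. This avoids calculus and gives exactly the stated constant. Set $t_* \coloneqq (b/a)^{\frac{1}{p-q}}$, so that $a t_*^{p-q} = b$.

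\emph{Case $t \leq t_*$.} Drop the nonpositive term $-a t^p$ and use that $t^q$ is nondecreasing in $t$ (here $q \geq 1 > 0$):
\begin{align}
  b t^q - a t^p \leq b t^q \leq b t_*^q = b \prn*{\frac{b}{a}}^{\frac{q}{p-q}}.
\end{align}

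\emph{Case $t > t_*$.} Write $g(t) = t^q \prn{b - a t^{p-q}}$. Since $p-q>0$, we have $a t^{p-q} > a t_*^{p-q} = b$, so the factor $b - a t^{p-q}$ is negative. Hence $g(t) < 0$, which is at most the claimed right-hand side because that side is positive.

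The edge case $t = 0$ is covered by the first case and gives $g(0)=0$. There is no real obstacle here; the only point needing care is choosing the threshold $t_*$ so that the first case yields exactly the constant $b(b/a)^{q/(p-q)}$, rather than the sharper but messier value $\max_t g(t)$ found by calculus.
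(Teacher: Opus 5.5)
Your proof is correct, and it takes a different route from the paper's. The paper uses calculus. It locates the maximizer of $\phi(t) = b t^q - a t^p$ at $t = (bq/(ap))^{1/(p-q)}$ and computes the exact maximum $\frac{p-q}{p}\, b\, (bq/(ap))^{q/(p-q)}$. It then reaches the stated bound by discarding the factors $\frac{p-q}{p} < 1$ and $(q/p)^{q/(p-q)} < 1$. You instead split at the root $t_* = (b/a)^{1/(p-q)}$ of $b - a t^{p-q}$. Below $t_*$ you drop $-a t^p$ and use monotonicity of $t^q$; above $t_*$ the expression is negative. Your argument is shorter and purely elementary: it needs no derivative and no check that the critical point is the global maximizer on $[0,\infty)$, and it lands exactly on the stated constant. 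The paper's route instead yields the sharp value of $\sup_{t \geq 0} \phi(t)$. That would permit slightly smaller constants in the downstream Step~2 bounds, although the paper does not exploit this and keeps only the cruder constant your argument gives directly.
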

\begin{proof}
  Let $\phi(t) \coloneqq b t^q - a t^p$.
  Since $\phi'(t) = b q t^{q-1} - a p t^{p-1}$, the function $\phi$ is maximized at $t = \prn[\big]{\frac{b q}{a p}}^{\frac{1}{p-q}}$.
  Hence,
  \begin{align}
    \phi(t)
    \leq
    \phi \prn*{\prn[\Big]{\frac{b q}{a p}}^{\frac{1}{p-q}}}
    =
    \frac{p-q}{p} b \prn*{\frac{b q}{a p}}^{\frac{q}{p-q}}
    \leq
    b \prn*{\frac{b}{a}}^{\frac{q}{p-q}},
  \end{align}
  where the last inequality follows from $p > q$.
\end{proof}

We will also use the following forms of H\"older's inequality.
\begin{lemma}[H\"older's inequality]
  Let $(a_i)_{i=1}^n$, $(b_i)_{i=1}^n$, and $(c_i)_{i=1}^n$ be nonnegative sequences.
  For all $\alpha, \beta > 0$ such that $\alpha + \beta = 1$, we have
  \begin{align}
    \sum_{i=1}^n a_i^\alpha b_i^\beta
    &\leq
    \prn*{
      \sum_{i=1}^n a_i
    }^\alpha
    \prn*{
      \sum_{i=1}^n b_i
    }^\beta.
    \label{eq:holder_2}
  \end{align}
  For all $\alpha, \beta, \gamma > 0$ such that $\alpha + \beta + \gamma = 1$, we have
  \begin{align}
    \sum_{i=1}^n a_i^\alpha b_i^\beta c_i^\gamma
    &\leq
    \prn*{
      \sum_{i=1}^n a_i
    }^\alpha
    \prn*{
      \sum_{i=1}^n b_i
    }^\beta
    \prn*{
      \sum_{i=1}^n c_i
    }^\gamma.
    \label{eq:holder_3}
  \end{align}
\end{lemma}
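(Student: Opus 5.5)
The plan is to derive both forms from the standard two-sequence H\"older inequality with conjugate exponents, or more elementarily from the weighted AM--GM (Young) inequality applied after normalization. For \cref{eq:holder_2}, let $A \coloneqq \sum_{i=1}^n a_i$ and $B \coloneqq \sum_{i=1}^n b_i$. If $A = 0$, then every $a_i = 0$, so the left-hand side vanishes because $\alpha > 0$; the inequality then holds trivially, and the same applies if $B = 0$. Otherwise we normalize and use the weighted AM--GM inequality $x^\alpha y^\beta \leq \alpha x + \beta y$, valid for $x, y \geq 0$ and $\alpha + \beta = 1$, which follows from the concavity of $\log$. Applying it with $x = a_i / A$ and $y = b_i / B$ and summing over $i$ gives
\begin{align}
  \frac{1}{A^\alpha B^\beta} \sum_{i=1}^n a_i^\alpha b_i^\beta
  \leq
  \alpha \sum_{i=1}^n \frac{a_i}{A} + \beta \sum_{i=1}^n \frac{b_i}{B}
  =
  \alpha + \beta
  =
  1,
\end{align}
and multiplying through by $A^\alpha B^\beta$ yields \cref{eq:holder_2}.

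For \cref{eq:holder_3}, the same argument works verbatim with the three-term weighted AM--GM inequality $x^\alpha y^\beta z^\gamma \leq \alpha x + \beta y + \gamma z$, using the normalizations $A$, $B$, and $C \coloneqq \sum_{i=1}^n c_i$. The zero cases are handled exactly as before. Alternatively, \cref{eq:holder_3} can be obtained from \cref{eq:holder_2} by induction. Write $a_i^\alpha b_i^\beta c_i^\gamma = a_i^\alpha \bigl(b_i^{\beta/(1-\alpha)} c_i^{\gamma/(1-\alpha)}\bigr)^{1-\alpha}$, apply \cref{eq:holder_2} with the exponents $\alpha$ and $1-\alpha$, and then apply \cref{eq:holder_2} once more to the inner sum with the exponents $\beta/(1-\alpha)$ and $\gamma/(1-\alpha)$, which sum to $1$.

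No step is a genuine obstacle. The only points needing care are the degenerate cases in which some sum vanishes, and the justification of weighted AM--GM, which I would cite as Jensen's inequality for $\log$ applied to the positive entries. Entries equal to zero make the left side of weighted AM--GM zero, so the inequality holds trivially for them.
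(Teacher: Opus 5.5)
Your proof is correct: normalizing by the sums and applying weighted AM--GM, with the zero-sum cases handled separately, is the standard textbook argument. Both of your routes to the three-sequence form are valid: the three-term AM--GM, and two applications of the two-sequence form with exponents $\alpha, 1-\alpha$ and then $\beta/(1-\alpha), \gamma/(1-\alpha)$. The paper gives no proof of this lemma and simply invokes it as the classical H\"older inequality, so your argument supplies exactly the standard justification the paper takes for granted.
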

Applying \cref{eq:holder_2} with $(\alpha, \beta) = (\frac{1}{p}, 1 - \frac{1}{p})$ gives
\begin{align}
  \sum_{i=0}^{k-1} \lambda_i \norm*{s_i}^{p-1}
  =
  \sum_{i=0}^{k-1} \lambda_i^\frac{1}{p} \prn*{ \lambda_i \norm*{s_i}^p }^{1 - \frac{1}{p}}
  \leq
  \prn*{
    \sum_{i=0}^{k-1} \lambda_i
  }^{\frac{1}{p}}
  \prn*{
    \sum_{i=0}^{k-1} \lambda_i \norm*{s_i}^p
  }^{1 - \frac{1}{p}}.
  \label{eq:holder_example}
\end{align}
This inequality and similar applications of H\"older's inequality will be used repeatedly in the analyses.

\subsection{Guiding example: Gradient descent}
\label{sec:gd}
We now demonstrate the complexity analysis for a higher-order-regularized variant of gradient descent.
The algorithm defines the step $s_k = x_{k+1} - x_k$ by
\begin{align}
  s_k
  =
  \argmin_{s \in \R^d}
  \set*{
    \inner*{\nabla f(x_k)}{s}
    + \frac{\lambda_k}{p} \norm*{s}^p
  },
  \label{eq:hor_gd}
\end{align}
where $p > 2$ and $\lambda_k > 0$.
Here, $p = 2$ would recover the standard gradient descent, while our recipe uses $p > 2$.
If $\nabla f(x_k) = \0$, then $s_k = \0$.
Otherwise, the unique solution to the subproblem~\cref{eq:hor_gd} is given by
\begin{align}
  s_k = - \prn*{\lambda_k \norm*{\nabla f(x_k)}^{p-2}}^{- \frac{1}{p-1}} \nabla f(x_k),
  \label{eq:hor_gd_explicit}
\end{align}
which coincides with the $\beta$-normalized gradient descent update~\citep{chen2023generalized}.
For the analysis, we impose the following standard assumption.
\begin{assumption}
  \label{asm:gd}
  There exists $L > 0$ such that $\norm*{\nabla f(x) - \nabla f(y)} \leq L \norm*{x - y}$ for all $x, y \in \R^d$.
\end{assumption}

To derive the complexity bound, we first bound the one-step progress $f(x_{i+1}) - f(x_i)$ and the gradient norm $\norm*{\nabla f(x_i)}$ in terms of $\lambda_i$ and $\norm*{s_i}$.
The proof is similar to the case $p = 2$.
\begin{lemma}
  \label{lem:gd_bounds_f_grad}
  Suppose that \cref{asm:gd} holds and let $p > 2$.
  Then, for all $i \in \N$, we have
  \begin{align}
    f(x_{i+1}) - f(x_i)
    &\leq
    \frac{L}{2} \norm*{s_i}^2 - \lambda_i \norm*{s_i}^p,
    \label{eq:lem_gd_decrease}\\
    \norm*{\nabla f(x_i)}
    &\leq
    \lambda_i \norm*{s_i}^{p-1}.
    \label{eq:lem_gd_gradnorm}
  \end{align}
\end{lemma}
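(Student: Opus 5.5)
The plan is to use two ingredients: the descent lemma implied by \cref{asm:gd}, and the first-order optimality condition of the subproblem \cref{eq:hor_gd}. First we settle the trivial case. If $\nabla f(x_i) = \0$, then $s_i = \0$, and both inequalities hold with equality $0 \le 0$. From now on we assume $\nabla f(x_i) \neq \0$. Then $s_i$ is given by the explicit formula \cref{eq:hor_gd_explicit}, and in particular $s_i \neq \0$.

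\textbf{Optimality condition.} The regularizer $\frac{\lambda_i}{p}\norm*{s}^p$ is differentiable with gradient $\lambda_i \norm*{s}^{p-2} s$, since $p > 2$. Hence the stationarity condition of \cref{eq:hor_gd} reads
\begin{align}
  \nabla f(x_i) + \lambda_i \norm*{s_i}^{p-2} s_i = \0 .
\end{align}
One can also check this directly from \cref{eq:hor_gd_explicit}. Taking norms gives $\norm*{\nabla f(x_i)} = \lambda_i \norm*{s_i}^{p-1}$, which yields \cref{eq:lem_gd_gradnorm}, in fact with equality. Taking the inner product with $s_i$ gives
\begin{align}
  \inner*{\nabla f(x_i)}{s_i} = -\lambda_i \norm*{s_i}^p .
\end{align}

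\textbf{Decrease bound.} Under \cref{asm:gd}, the standard descent lemma gives
\begin{align}
  f(x_{i+1}) \le f(x_i) + \inner*{\nabla f(x_i)}{s_i} + \frac{L}{2}\norm*{s_i}^2 .
\end{align}
Substituting the inner-product identity above yields \cref{eq:lem_gd_decrease} directly.

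The argument is routine, and no step is a real obstacle. The only point needing care is justifying the optimality condition. This can be handled either by verifying the explicit formula \cref{eq:hor_gd_explicit} by direct substitution, or by noting that the objective is strictly convex and differentiable, so that its stationary point is its unique minimizer.
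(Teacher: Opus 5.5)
Your proposal is correct and follows the paper's proof: the first-order optimality condition $\nabla f(x_i) = -\lambda_i \norm*{s_i}^{p-2} s_i$ gives \cref{eq:lem_gd_gradnorm} (with equality), and substituting $\inner*{\nabla f(x_i)}{s_i} = -\lambda_i \norm*{s_i}^p$ into the descent lemma gives \cref{eq:lem_gd_decrease}. Your separate treatment of the case $\nabla f(x_i) = \0$ is harmless but not needed, since the optimality condition already holds trivially at $s_i = \0$.
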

\begin{proof}
  The first-order optimality condition for subproblem~\cref{eq:hor_gd} gives
  \begin{align}
    \nabla f(x_i)
    = - \lambda_i \norm*{s_i}^{p-2} s_i,
  \end{align}
  which proves \cref{eq:lem_gd_gradnorm}.
  Combining the standard descent lemma under \cref{asm:gd} with this equation yields
  \begin{align}
    f(x_{i+1}) - f(x_i)
    &\leq
    \inner*{\nabla f(x_i)}{s_i}
    + \frac{L}{2} \norm*{s_i}^2
    =
    - \lambda_i \norm*{s_i}^p
    + \frac{L}{2} \norm*{s_i}^2,
  \end{align}
  which proves \cref{eq:lem_gd_decrease}.
\end{proof}

Combining these bounds yields the following general upper bound on the gradient norm.
Recall that $\Delta$ is defined in \cref{eq:def_delta}.
\begin{lemma}
  \label{lem:gd_rate_general}
  Suppose that \cref{asm:gd} holds and let $p > 2$.
  Then, for all $k \geq 1$, we have
  \begin{align}
    \min_{0 \leq i < k} \norm*{\nabla f(x_i)}
    \leq
    \frac{1}{k}
    \prn*{
      \sum_{i=0}^{k-1} \lambda_i
    }^{\frac{1}{p}}
    \prn*{
      2 \Delta
      + L \sum_{i=0}^{k-1} \prn*{\frac{L}{\lambda_i}}^{\frac{2}{p-2}}
    }^{\frac{p-1}{p}}.
    \label{eq:gd_rate_general}
  \end{align}
\end{lemma}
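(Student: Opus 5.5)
The plan is to combine the two bounds of \cref{lem:gd_bounds_f_grad} through a telescoping sum and then apply the H\"older estimate \cref{eq:holder_example}.

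First, we control $\sum_{i<k} \lambda_i \norm*{s_i}^p$. Rewrite \cref{eq:lem_gd_decrease} as
\begin{align}
  \lambda_i \norm*{s_i}^p \leq f(x_i) - f(x_{i+1}) + \frac{L}{2}\norm*{s_i}^2 .
\end{align}
The term $\frac{L}{2}\norm*{s_i}^2$ must be absorbed, at least partially, by $\lambda_i\norm*{s_i}^p$. To do this, split $\lambda_i \norm*{s_i}^p = \frac{\lambda_i}{2}\norm*{s_i}^p + \frac{\lambda_i}{2}\norm*{s_i}^p$. Then apply \cref{lem:power_diff_bound} with $t = \norm*{s_i}$, $q = 2$, $b = L/2$, and $a = \lambda_i/2$:
\begin{align}
  \frac{L}{2}\norm*{s_i}^2 - \frac{\lambda_i}{2}\norm*{s_i}^p \leq \frac{L}{2}\prn*{\frac{L}{\lambda_i}}^{\frac{2}{p-2}} .
\end{align}
This gives $\frac{\lambda_i}{2}\norm*{s_i}^p \leq f(x_i) - f(x_{i+1}) + \frac{L}{2}(L/\lambda_i)^{2/(p-2)}$. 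Summing over $i = 0,\dots,k-1$ and using $f(x_0) - f(x_k) \leq \Delta$, we get
\begin{align}
  \sum_{i=0}^{k-1} \lambda_i\norm*{s_i}^p \leq 2\Delta + L\sum_{i=0}^{k-1}\prn*{\frac{L}{\lambda_i}}^{\frac{2}{p-2}} .
\end{align}
This is exactly the second factor in \cref{eq:gd_rate_general} before it is raised to the power $\frac{p-1}{p}$.

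Second, we bound the minimum gradient norm by the average and use \cref{eq:lem_gd_gradnorm}:
\begin{align}
  \min_{i<k}\norm*{\nabla f(x_i)} \leq \frac1k\sum_{i<k}\norm*{\nabla f(x_i)} \leq \frac1k\sum_{i<k}\lambda_i\norm*{s_i}^{p-1} .
\end{align}
Applying \cref{eq:holder_example} then bounds the right-hand side by $\frac1k(\sum\lambda_i)^{1/p}(\sum\lambda_i\norm*{s_i}^p)^{(p-1)/p}$. Substituting the first step gives \cref{eq:gd_rate_general}.

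The only delicate point is choosing the constants in the split so that the factor $2$ on $\Delta$ and the coefficient $L$ (rather than, say, $L/2$ times some other constant) come out exactly. The half/half split above does this. Everything else is routine bookkeeping.
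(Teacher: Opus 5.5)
Your proposal is correct and follows the paper's proof essentially step for step. You keep half of $\lambda_i\norm*{s_i}^p$ and use \cref{lem:power_diff_bound} on the other half. Your choice $a=\lambda_i/2$, $b=L/2$ gives the same constant $\frac{L}{2}(L/\lambda_i)^{2/(p-2)}$ that the paper gets by factoring out $\frac12$ and taking $a=\lambda_i$, $b=L$. Then you telescope, and finish with \cref{eq:lem_gd_gradnorm} and the H\"older estimate \cref{eq:holder_example}, exactly as the paper does.
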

\begin{proof}
  We first derive an upper bound for $\sum_{i=0}^{k-1} \lambda_i \norm*{s_i}^p$.
  Rearranging \cref{eq:lem_gd_decrease} and applying \cref{lem:power_diff_bound} with $t = \norm*{s_i}$ gives
  \begin{align}
    f(x_{i+1}) - f(x_i)
    + \frac{\lambda_i}{2} \norm*{s_i}^p
    \leq
    \frac{1}{2} \prn*{
      L \norm*{s_i}^2
      - \lambda_i \norm*{s_i}^p
    }
    \leq
    \frac{L}{2} \prn*{\frac{L}{\lambda_i}}^{\frac{2}{p-2}}.
    \label{eq:gd_obj_dec_bound}
  \end{align}
  Summing this bound over $0 \leq i < k$ and rearranging terms yields
  \begin{align}
    \sum_{i=0}^{k-1} \lambda_i \norm*{s_i}^p
    &\leq
    2 \Delta
    + L \sum_{i=0}^{k-1} \prn*{\frac{L}{\lambda_i}}^{\frac{2}{p-2}},
    \label{eq:gd_sum_lamsp}
  \end{align}
  where we have used $\sum_{i=0}^{k-1} \prn*{f(x_i) - f(x_{i+1})} = f(x_0) - f(x_k) \leq \Delta$.

  Next, summing \cref{eq:lem_gd_gradnorm} over $0 \leq i < k$ and applying \cref{eq:holder_example} gives
  \begin{align}
    \sum_{i=0}^{k-1} \norm*{\nabla f(x_i)}
    \leq
    \sum_{i=0}^{k-1} \lambda_i \norm*{s_i}^{p-1}
    \leq
    \prn*{
      \sum_{i=0}^{k-1} \lambda_i
    }^{\frac{1}{p}}
    \prn*{
      \sum_{i=0}^{k-1} \lambda_i \norm*{s_i}^p
    }^{1 - \frac{1}{p}}.
    \label{eq:gd_sum_grad}
  \end{align}
  Plugging \cref{eq:gd_sum_lamsp} into this bound and using $\min_{0 \leq i < k} \norm*{\nabla f(x_i)} \leq \frac{1}{k} \sum_{i=0}^{k-1} \norm*{\nabla f(x_i)}$ completes the proof.
\end{proof}

We emphasize that \cref{eq:gd_rate_general} holds for any positive sequence $(\lambda_k)_{k \in \N}$; this is the main benefit of choosing $p > 2$.
In the standard analysis for $p = 2$, one typically requires $\lambda_k > \frac{L}{2}$ to guarantee monotone decrease of the objective value, as follows from \cref{eq:lem_gd_decrease}.
With $p > 2$, monotone decrease is not necessarily guaranteed at every iteration, but the possible increase is controlled through higher-order regularization, as quantified in \cref{eq:gd_obj_dec_bound}.
Consequently, we obtain the bound \cref{eq:gd_rate_general} for arbitrary $\lambda_k > 0$.
This robustness to misspecification of $\lambda_k$ is the central mechanism behind our parameter-free construction.

The final step of the analysis is to turn the general bound \cref{eq:gd_rate_general} into explicit convergence rates by specifying $(\lambda_k)_{k \in \N}$.
Choosing $\lambda_k$ to balance the terms in \cref{eq:gd_rate_general} yields the following bounds.

\begin{theorem}
  \label{thm:gd_rate}
  Suppose that \cref{asm:gd} holds.
  Let $p > 2$ and $c_\lambda > 0$ be arbitrary constants.
  \begin{itemize}
    \item
    Set $\lambda_k = c_\lambda (k+1)^{\frac{p-2}{2}}$ for all $k \in \N$.
    Then, the following holds for all $k \geq 1$:
    \begin{align}
      \min_{0 \leq i < k} \norm*{\nabla f(x_i)}
      &\leq
      \frac{c_\lambda^{\frac{1}{p}}}{\sqrt{k}}
      \prn*{
        2 \Delta + L \prn*{\frac{L}{c_\lambda}}^{\frac{2}{p-2}} (1 + \log k)
      }^{\frac{p-1}{p}}
      =
      \tilde \O \prn*{k^{-1/2}}.
      \label{eq:gd_rate_log}
    \end{align}
    \item
    Fix an integer $K \geq 1$ and set $\lambda_k = c_\lambda K^{\frac{p-2}{2}}$ for all $0 \leq k < K$.
    Then, the following holds:
    \begin{align}
      \min_{0 \leq i < K} \norm*{\nabla f(x_i)}
      &\leq
      \frac{c_\lambda^{\frac{1}{p}}}{\sqrt{K}}
      \prn*{
        2 \Delta + L \prn*{\frac{L}{c_\lambda}}^{\frac{2}{p-2}}
      }^{\frac{p-1}{p}}
      =
      \O \prn*{K^{-1/2}}.
      \label{eq:gd_rate}
    \end{align}
  \end{itemize}
\end{theorem}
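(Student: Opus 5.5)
Both bounds follow by substituting the prescribed $(\lambda_i)$ into \cref{eq:gd_rate_general} of \cref{lem:gd_rate_general}, which holds for an arbitrary positive sequence. So the work reduces to bounding two sums, $\sum_{i=0}^{k-1}\lambda_i$ and $\sum_{i=0}^{k-1}(L/\lambda_i)^{\frac{2}{p-2}}$, and then collecting powers of $k$.

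For the first bullet, set $\lambda_i = c_\lambda (i+1)^{\frac{p-2}{2}}$.

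\begin{itemize}
  \item Since $i+1\le k$ for $i<k$, we get $\sum_{i=0}^{k-1}\lambda_i \le c_\lambda k\cdot k^{\frac{p-2}{2}} = c_\lambda k^{\frac{p}{2}}$, and hence $\bigl(\sum_i\lambda_i\bigr)^{\frac1p}\le c_\lambda^{\frac1p}k^{\frac12}$.
  \item The exponents cancel exactly: $(L/\lambda_i)^{\frac{2}{p-2}} = (L/c_\lambda)^{\frac{2}{p-2}}(i+1)^{-1}$. Therefore
  \begin{align}
    \sum_{i=0}^{k-1}\Bigl(\frac{L}{\lambda_i}\Bigr)^{\frac{2}{p-2}}
    = \Bigl(\frac{L}{c_\lambda}\Bigr)^{\frac{2}{p-2}} \sum_{j=1}^{k}\frac1j
    \le \Bigl(\frac{L}{c_\lambda}\Bigr)^{\frac{2}{p-2}}(1+\log k),
  \end{align}
  using the standard harmonic-sum bound $\sum_{j=1}^k \frac1j \le 1+\int_1^k \frac{dt}{t}$.
  \item Plugging both into \cref{eq:gd_rate_general}, the prefactor becomes $\frac1k\cdot c_\lambda^{\frac1p}k^{\frac12} = c_\lambda^{\frac1p}k^{-\frac12}$. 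The second factor is exactly the bracket in \cref{eq:gd_rate_log}, which gives the claim.
\end{itemize}

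For the second bullet, the $\lambda_i$ are constant, so \cref{lem:gd_rate_general} applies with $k=K$.

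\begin{itemize}
  \item First, $\sum_{i<K}\lambda_i = c_\lambda K^{\frac p2}$, which again yields the prefactor $c_\lambda^{\frac1p}K^{-\frac12}$.
  \item Second, $\sum_{i<K}(L/\lambda_i)^{\frac{2}{p-2}} = K\cdot (L/c_\lambda)^{\frac{2}{p-2}}K^{-1} = (L/c_\lambda)^{\frac{2}{p-2}}$, with no logarithm.
\end{itemize}

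Substituting these gives \cref{eq:gd_rate}.

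Nothing here is a real obstacle. The one point needing care is the exponent bookkeeping: the choice $\lambda\propto k^{\frac{p-2}{2}}$ is exactly what makes $(L/\lambda_i)^{\frac{2}{p-2}}$ scale like $1/(i+1)$ (respectively like $1/K$), so the error sum stays logarithmic (respectively constant). The same choice makes $(\sum\lambda_i)^{1/p}$ scale like $\sqrt{k}$, which produces the $k^{-1/2}$ rate.
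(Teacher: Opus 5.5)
Your proposal is correct and follows the paper's proof essentially verbatim. You substitute the prescribed $\lambda_i$ into \cref{eq:gd_rate_general}, bound $\sum_{i<k}\lambda_i \le c_\lambda k^{p/2}$, and reduce $\sum_{i<k}(L/\lambda_i)^{\frac{2}{p-2}}$ to a harmonic sum bounded by $1+\log k$ (or to the constant $(L/c_\lambda)^{\frac{2}{p-2}}$ in the fixed-$K$ case), which is exactly the paper's argument.
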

\begin{proof}
  If $\lambda_k = c_\lambda (k+1)^{\frac{p-2}{2}}$ for all $k \in \N$, then we can bound the sums in \cref{eq:gd_rate_general} as
  \begin{align}
    \sum_{i=0}^{k-1} \lambda_i
    \leq
    k \lambda_{k-1}
    =
    c_\lambda k^{\frac{p}{2}},\quad
    \sum_{i=0}^{k-1} \prn*{\frac{L}{\lambda_i}}^{\frac{2}{p-2}}
    &=
    \prn*{\frac{L}{c_\lambda}}^{\frac{2}{p-2}} \sum_{i=1}^k \frac{1}{i}
    \leq
    \prn*{\frac{L}{c_\lambda}}^{\frac{2}{p-2}} (1 + \log k).
  \end{align}
  Substituting these bounds into \cref{eq:gd_rate_general} gives the first result~\cref{eq:gd_rate_log}.
  The second result~\cref{eq:gd_rate} is obtained in a similar manner.
\end{proof}

The bound in \cref{eq:gd_rate_log} shows that the algorithm achieves $\tilde{\O}(\epsilon^{-2})$ complexity without prior knowledge of $L$, $\Delta$, or the total number of iterations $K$.
If $K$ is specified in advance, the bound~\cref{eq:gd_rate} gives $\O(\epsilon^{-2})$ complexity.
If $L$ and $\Delta$ are known up to constant factors, then setting $c_\lambda = \Theta \prn[\big]{\Delta^{\frac{2-p}{2}} L^{\frac{p}{2}}}$ in \cref{eq:gd_rate} yields the simplified rate
\begin{align}
  \min_{0 \leq i < K} \norm*{\nabla f(x_i)}
  &\leq
  \O \prn[\bigg]{ \sqrt{\frac{\Delta L}{K}} }.
\end{align}
The resulting complexity is $\O(\Delta L \epsilon^{-2})$, which is optimal in its dependence on $\Delta$ and $L$ as well as on $\epsilon$~\citep{carmon2020lower}.

Although the bound \cref{eq:gd_rate} assumes that $K$ is specified in advance, this assumption can be removed by using a doubling trick (e.g., \citep[Section~2.3.1]{shalev2012online}), as described in \cref{alg:doubling_trick}.
At stage $t = 0,1,\dots$, we set $K = 2^t$ and run $K$ iterations of~\cref{eq:hor_gd} from the same initial point $x_0$, using the constant regularization parameter $\lambda_k = c_\lambda K^{\frac{p-2}{2}}$ throughout that stage.
The restart from $x_0$ is needed to obtain the desired guarantee because the objective value is not guaranteed to decrease monotonically.
Applying \cref{eq:gd_rate} to each stage gives the following guarantee.

\begin{corollary}
  \label{cor:gd_rate_doubling_trick}
  Suppose that \cref{asm:gd} holds.
  Let $p > 2$ and $c_\lambda > 0$ be arbitrary constants.
  Let $(A_t)_{t \in \N}$ be the sequence of sets generated by \cref{alg:doubling_trick}.
  Then, the following holds for all $t \in \N$ and $K = 2^t$:
  \begin{align}
    \min_{x \in A_t} \norm*{\nabla f(x)}
    \leq
    \frac{c_\lambda^{\frac{1}{p}}}{\sqrt{K}}
    \prn*{
      2 \Delta + L \prn*{\frac{L}{c_\lambda}}^{\frac{2}{p-2}}
    }^{\frac{p-1}{p}}.
  \end{align}
\end{corollary}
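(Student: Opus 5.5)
The plan is to reduce the corollary directly to the second bullet of \cref{thm:gd_rate}, the fixed-horizon bound \cref{eq:gd_rate}. Although \cref{alg:doubling_trick} is referenced before it is displayed, it is described in the text: at stage $t$ it sets $K = 2^t$, restarts from the common initial point $x_0$, and runs $K$ iterations of \cref{eq:hor_gd} with the constant parameter $\lambda_k = c_\lambda K^{\frac{p-2}{2}}$. I will assume that $A_t$ is the set of iterates $\set{x_0^{(t)}, \dots, x_{K-1}^{(t)}}$ produced during stage $t$, where the superscript indexes the stage and $x_0^{(t)} = x_0$.

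First, I fix $t \in \N$ and look at stage $t$ by itself. Its iterates form exactly the sequence generated by \cref{eq:hor_gd} from $x_0$ with $\lambda_k = c_\lambda K^{\frac{p-2}{2}}$ for $0 \leq k < K$. No information carries over from earlier stages, because of the restart. Since the stage starts at $x_0$, the quantity $\Delta$ from \cref{eq:def_delta} is the same as in the theorem. \cref{asm:gd} is a property of $f$ alone, so it holds as well.

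Second, I apply \cref{eq:gd_rate} to this sequence with this $K$. This immediately gives
\begin{align}
  \min_{0 \leq i < K} \norm*{\nabla f(x_i^{(t)})}
  \leq
  \frac{c_\lambda^{\frac{1}{p}}}{\sqrt{K}}
  \prn*{2\Delta + L\prn*{\frac{L}{c_\lambda}}^{\frac{2}{p-2}}}^{\frac{p-1}{p}}.
\end{align}
Identifying the left-hand side with $\min_{x \in A_t} \norm*{\nabla f(x)}$ finishes the proof.

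There is no real analytic obstacle; the only point needing care is bookkeeping. The restart is what makes $\Delta$ the correct constant. Had stage $t$ started from the last iterate of stage $t-1$, we would need $f$ at that point to be at most $f(x_0)$, and this can fail because the method is not monotone. I would therefore state explicitly that the restart is what allows \cref{thm:gd_rate} to be applied verbatim. I would also check the indexing convention for $A_t$. If the algorithm instead records $x_K^{(t)}$ as well, the minimum over a superset is only smaller, so the bound still holds.
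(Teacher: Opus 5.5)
Your proposal is correct and matches the paper's argument: the paper likewise obtains the corollary by applying the fixed-horizon bound \cref{eq:gd_rate} to each stage, which restarts from $x_0$ so that $\Delta$ is unchanged. Note that \cref{alg:doubling_trick} actually defines $A_t = \set{x_0, x_1, \dots, x_K}$, which includes the final iterate, so your superset remark is the case that applies, and it correctly shows the bound is unaffected.
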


\begin{algorithm}[t]
  \caption{Higher-order regularized gradient descent with the doubling trick}
  \label{alg:doubling_trick}
  \begin{algorithmic}[1]
    \Require{$x_0 \in \R^d$, $p > 2$, $c_\lambda > 0$}
    \For{$t = 0, 1, \dots$}
      \State $K \gets 2^t$
      \For{$k = 0, 1, \dots, K-1$}
        \State Compute $s_k$ by \cref{eq:hor_gd} with $\lambda_k = c_\lambda K^{\frac{p-2}{2}}$
        \State $x_{k+1} \gets x_k + s_k$
      \EndFor
      \State $A_t \gets \set*{x_0, x_1, \dots, x_K}$
    \EndFor
  \end{algorithmic}
\end{algorithm}

The set $A_t$ is obtained after $\sum_{i=0}^t 2^i = 2^{t+1} - 1 = 2 K - 1$ inner iterations, where $K = 2^t$.
Thus, the doubling trick increases the total number of iterations by less than a factor of two and preserves the same complexity up to a universal constant factor.
The same argument applies to the other algorithms considered in the subsequent sections, so we present only their guarantees for a fixed number of iterations.

\paragraph{Related work.}
The higher-order regularized gradient descent~\cref{eq:hor_gd} is closely related to normalized gradient descent (NGD).
Earlier normalized methods have been studied in quasiconvex optimization \citep{kiwiel2001convergence,hazan2015beyond}.
Most closely related to our instantiation is the $\beta$-NGD of \citet{chen2023generalized}, which uses
\begin{align}
  x_{k+1}
  =
  x_k - \frac{\eta_k}{\norm*{\nabla f(x_k)}^\beta} \nabla f(x_k),
  \label{eq:beta_ngd}
\end{align}
where $\beta \in [0,1]$ and $\eta_k > 0$.
With suitable parameter choices, $\beta$-NGD achieves the optimal $\O(\epsilon^{-2})$ complexity under generalized smoothness assumptions.
For the case $\beta = 1$, \citet{yang2023two} obtained a parameter-free $\tilde \O(\epsilon^{-2})$ bound.
Our update~\cref{eq:hor_gd_explicit} is equivalent to $\beta$-NGD with $\beta = \frac{p-2}{p-1}$, and our analysis gives parameter-free $\O(\epsilon^{-2})$ bounds.
The main point is that this update arises from a higher-order-regularized subproblem, which then serves as a template for more general model-based algorithms.

\subsection{The general recipe for model-based methods}
\label{sec:recipe_general}
The gradient-descent example above illustrates the main mechanism of the recipe.
We now extract a basic template for analyzing model-based methods of the form \cref{eq:subproblem_hor}.
This template captures the core argument used throughout the paper, although some of the algorithms analyzed later require minor modifications to individual steps.

\paragraph{Step 1: Bound $f(x_{i+1}) - f(x_i)$ and $\norm*{\nabla f(x_i)}$ by $\lambda_i$ and $\norm*{s_i}$.}
For each $i \in \N$, we first derive upper bounds on $f(x_{i+1}) - f(x_i)$ and $\norm*{\nabla f(x_i)}$ in terms of $\lambda_i$ and $\norm*{s_i}$.
In the gradient-descent example, this step is carried out in \cref{lem:gd_bounds_f_grad}.
Such bounds are usually obtained using two ingredients: an upper bound on the error of the local model, typically derived from Lipschitz continuity of the relevant derivative, and the first-order optimality condition for the subproblem.
This part of the argument is close to the analysis of the corresponding method with the standard choice of regularization.

\paragraph{Step 2: Bound $\sum_{i=0}^{k-1} \lambda_i \norm*{s_i}^p$ by $\lambda_0,\dots,\lambda_{k-1}$.}
We next convert the upper bound on $f(x_{i+1}) - f(x_i)$ obtained in Step~1 into an upper bound on $\sum_{i=0}^{k-1} \lambda_i \norm*{s_i}^p$.
The bound from Step~1 contains the negative regularization term $-\lambda_i \norm*{s_i}^p$, together with lower-order positive terms in $\norm*{s_i}$ arising from the model error.
The key operation is to retain a positive fraction of $\lambda_i \norm*{s_i}^p$ on the left-hand side and use the remaining fraction to control the right-hand side independently of $\norm*{s_i}$.
This is the point at which the higher-order regularization is essential.
Since the regularization exponent $p$ is larger than the order of the model error, \cref{lem:power_diff_bound} can be applied with $t = \norm*{s_i}$ to eliminate $\norm*{s_i}$ from the right-hand side, as in \cref{eq:gd_obj_dec_bound}.
Summing the resulting inequalities over $0 \leq i < k$ then gives the desired bound, as in \cref{eq:gd_sum_lamsp}.

\paragraph{Step 3: Bound $\sum_{i=0}^{k-1} \norm*{\nabla f(x_i)}$ by $\sum_{i=0}^{k-1} \lambda_i \norm*{s_i}^p$.}
Summing the gradient-norm bound obtained in Step~1 over $0 \leq i < k$, we obtain an upper bound on $\sum_{i=0}^{k-1} \norm*{\nabla f(x_i)}$.
This upper bound typically involves mixed terms containing both $\lambda_i$ and $\norm*{s_i}$.
The key operation is to apply H\"older's inequality to bound these mixed terms in terms of $\sum_{i=0}^{k-1} \lambda_i \norm*{s_i}^p$ and sums depending only on the $\lambda_i$'s, as in \cref{eq:gd_sum_grad}.

\paragraph{Step 4: Obtain convergence rates.}
Combining the bounds from Steps~2 and 3 yields a general upper bound on $\sum_{i=0}^{k-1} \norm*{\nabla f(x_i)}$ in terms of $\lambda_0,\dots,\lambda_{k-1}$, as in \cref{lem:gd_rate_general}.
Specifying $(\lambda_k)_{k \in \N}$ to balance the terms in this bound then gives explicit convergence rates, as in \cref{thm:gd_rate}.

\bigskip

In the following sections, this mechanism is instantiated in different forms for Newton's method, the Gauss--Newton method, SGD, and PAGE.

\section{Newton's method}
\label{sec:newton}
This section applies the higher-order regularization recipe to Newton's method.

\subsection{Algorithm and assumptions}
\label{sec:newton_algorithm}
At each iteration $k \in \N$, the algorithm computes the step $s_k \coloneqq x_{k+1} - x_k$ by approximately solving the following subproblem:
\begin{align}
  \min_{s \in \R^d}\ 
  \set*{
    m_k(s)
    \coloneqq
    \inner*{g_k}{s}
    + \frac{1}{2} \inner*{H_k s}{s}
    + \frac{\lambda_k}{p} \norm*{s}^p
  },
  \label{eq:newton_subproblem}
\end{align}
where $g_k \coloneqq \nabla f(x_k)$, $H_k \coloneqq \nabla^2 f(x_k)$, $p > 3$, and $\lambda_k > 0$.
This subproblem can be solved by standard methods for regularized quadratic subproblems.
For example, one may use methods based on secular equations~\citep{gould2010solving} or Krylov subspace methods~\citep{gould2020error}.

The algorithm considered in this section allows for inexact solutions to the subproblem~\cref{eq:newton_subproblem}.
More specifically, we assume that $s_k$ satisfies the following conditions for all $k \in \N$:
\begin{subequations}
\begin{align}
    \norm*{\nabla m_k(s_k)}
    &\leq
    \frac{\lambda_k}{2} \norm*{s_k}^{p-1},
    \label{eq:newton_approx_stationarity}\\
    \inner*{g_k}{s_k}
    &\leq
    0.
    \label{eq:newton_approx_descent}
\end{align}
\end{subequations}
The first condition requires $s_k$ to satisfy an approximate first-order optimality condition for the subproblem, which is a common requirement in regularized Newton methods; see, e.g.,~\citep{cartis2011adaptive,cartis2011adaptive2,cartis2019universal}.
The second condition is a mild descent-type requirement, and it is equivalent to $m_k(s_k) \leq m_k(-s_k)$.
These conditions are automatically satisfied when $s_k$ is a global minimizer of $m_k$.

We use the following standard assumption in the subsequent complexity analysis; see, e.g., \citep{nesterov2006cubic,cartis2011adaptive,cartis2011adaptive2}.
\begin{assumption}
  \label{asm:newton}
  There exists $M > 0$ such that $\norm*{\nabla^2 f(x) - \nabla^2 f(y)} \leq M \norm*{x - y}$ for all $x, y \in \R^d$.
\end{assumption}
Under this assumption, the following Taylor-type bounds hold for every iteration $k \in \N$ (e.g., \citep[Lemma~1.2.4]{nesterov2018lectures}):
\begin{align}
  f(x_{k+1}) - f(x_k)
  &\leq
  \inner*{g_k}{s_k} + \frac{1}{2} \inner*{H_k s_k}{s_k} + \frac{M}{6} \norm*{s_k}^3,
  \label{eq:newton_taylor_obj}\\
  \norm*{g_{k+1} - g_k - H_k s_k}
  &\leq
  \frac{M}{2} \norm*{s_k}^2.
  \label{eq:newton_taylor_grad}
\end{align}

\subsection{Complexity analysis}
As the first step of the recipe, we derive upper bounds on $f(x_{i+1}) - f(x_i)$ and $\norm*{\nabla f(x_{i+1})}$ in terms of $\lambda_i$ and $\norm*{s_i}$.
Here we bound $\norm*{\nabla f(x_{i+1})}$ rather than $\norm*{\nabla f(x_i)}$; this is standard for Newton-type methods (see, e.g., \citep{nesterov2006cubic,cartis2011adaptive}).
\begin{lemma}
  Suppose that \cref{asm:newton} holds and let $p > 3$.
  Then, for all $i \in \N$, we have
  \begin{align}
    f(x_{i+1}) - f(x_i)
    &\leq
    \frac{M}{6} \norm*{s_i}^3
    - \frac{\lambda_i}{4} \norm*{s_i}^p,
    \label{eq:lem_newton_decrease}\\
    \norm*{\nabla f(x_{i+1})}
    &\leq
    \frac{M}{2} \norm*{s_i}^2
    + \frac{3}{2} \lambda_i \norm*{s_i}^{p-1}.
    \label{eq:lem_newton_gradnorm}
  \end{align}
\end{lemma}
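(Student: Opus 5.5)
We prove the two inequalities separately. Both use the Taylor-type bounds \cref{eq:newton_taylor_obj} and \cref{eq:newton_taylor_grad} together with the inexactness conditions \cref{eq:newton_approx_stationarity,eq:newton_approx_descent}. The key identity is the gradient of the model,
\begin{align}
  \nabla m_i(s_i) = g_i + H_i s_i + \lambda_i \norm*{s_i}^{p-2} s_i .
\end{align}

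\textbf{Gradient bound \cref{eq:lem_newton_gradnorm}.} This part is routine.
\begin{itemize}
  \item Write $\nabla f(x_{i+1}) = (g_{i+1} - g_i - H_i s_i) + \nabla m_i(s_i) - \lambda_i \norm*{s_i}^{p-2} s_i$.
  \item Apply the triangle inequality to the three terms.
  \item Bound the first term by $\frac{M}{2}\norm*{s_i}^2$ using \cref{eq:newton_taylor_grad}.
  \item Bound the second term by $\frac{\lambda_i}{2}\norm*{s_i}^{p-1}$ using \cref{eq:newton_approx_stationarity}.
  \item The third term has norm exactly $\lambda_i \norm*{s_i}^{p-1}$.
\end{itemize}
Adding these gives $\frac{M}{2}\norm*{s_i}^2 + \frac{3}{2}\lambda_i\norm*{s_i}^{p-1}$.

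\textbf{Decrease bound \cref{eq:lem_newton_decrease}.} By \cref{eq:newton_taylor_obj}, it suffices to show
\begin{align}
  \inner*{g_i}{s_i} + \frac{1}{2}\inner*{H_i s_i}{s_i} \leq -\frac{\lambda_i}{4}\norm*{s_i}^p .
\end{align}
Take the inner product of $\nabla m_i(s_i)$ with $s_i$ and apply Cauchy--Schwarz with \cref{eq:newton_approx_stationarity}:
\begin{align}
  \inner*{g_i}{s_i} + \inner*{H_i s_i}{s_i} + \lambda_i \norm*{s_i}^p
  = \inner*{\nabla m_i(s_i)}{s_i}
  \leq \frac{\lambda_i}{2}\norm*{s_i}^p .
\end{align}
Hence $\inner*{g_i}{s_i} + \inner*{H_i s_i}{s_i} \leq -\frac{\lambda_i}{2}\norm*{s_i}^p$.

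To get the quadratic term with coefficient $\frac{1}{2}$, average this inequality with \cref{eq:newton_approx_descent}, i.e., $\inner*{g_i}{s_i} \leq 0$:
\begin{align}
  \inner*{g_i}{s_i} + \frac{1}{2}\inner*{H_i s_i}{s_i}
  = \frac{1}{2}\inner*{g_i}{s_i} + \frac{1}{2}\prn*{\inner*{g_i}{s_i} + \inner*{H_i s_i}{s_i}}
  \leq 0 - \frac{\lambda_i}{4}\norm*{s_i}^p .
\end{align}
Substituting this into \cref{eq:newton_taylor_obj} yields \cref{eq:lem_newton_decrease}.

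The only nontrivial step is this averaging. The stationarity condition alone controls $\inner*{g_i}{s_i} + \inner*{H_i s_i}{s_i}$, which carries the full quadratic term. The descent condition \cref{eq:newton_approx_descent} is what converts it to the half-quadratic combination needed in the Taylor bound, at the cost of halving the constant from $\frac{1}{2}$ to $\frac{1}{4}$. Note that no assumption on $\lambda_i$ relative to $M$ is required, which is consistent with the recipe.
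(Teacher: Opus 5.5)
Your proposal is correct and matches the paper's proof essentially line for line. For the gradient bound you use the same three-term triangle-inequality decomposition. For the decrease bound you use the same splitting $\inner*{g_i}{s_i} + \frac{1}{2}\inner*{H_i s_i}{s_i} = \frac{1}{2}\inner*{g_i}{s_i} + \frac{1}{2}\inner*{g_i + H_i s_i}{s_i}$, combined with the descent condition $\inner*{g_i}{s_i} \leq 0$ and Cauchy--Schwarz applied to $\inner*{\nabla m_i(s_i)}{s_i}$.
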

\begin{proof}
  Since $\nabla m_i(s) = g_i + H_i s + \lambda_i \norm*{s}^{p-2} s$, condition~\cref{eq:newton_approx_stationarity} can be rewritten as
  \begin{align}
    \norm*{g_i + H_i s_i + \lambda_i \norm*{s_i}^{p-2} s_i}
    \leq
    \frac{\lambda_i}{2} \norm*{s_i}^{p-1}.
    \label{eq:newton_approx_stationarity_rewrite}
  \end{align}
  We have
  \begin{alignat}{2}
    f(x_{i+1}) - f(x_i)
    - \frac{M}{6} \norm*{s_i}^3
    &\leq
    \frac{1}{2} \inner*{g_i + H_i s_i}{s_i}
    &&\by{\cref{eq:newton_approx_descent,eq:newton_taylor_obj}}\\
    &=
    \frac{1}{2} \inner*{g_i + H_i s_i + \lambda_i \norm*{s_i}^{p-2} s_i}{s_i}
    - \frac{\lambda_i}{2} \norm*{s_i}^p\\
    &\leq
    \frac{\lambda_i}{4} \norm*{s_i}^p
    - \frac{\lambda_i}{2} \norm*{s_i}^p,
    &&\by{\cref{eq:newton_approx_stationarity_rewrite}}
  \end{alignat}
  which proves \cref{eq:lem_newton_decrease}.
  Using the triangle inequality, we have
  \begin{alignat}{2}
    \norm*{g_{i+1}}
    &\leq
    \norm*{
      g_{i+1} - g_i - H_i s_i
    }
    + \norm*{g_i + H_i s_i + \lambda_i \norm*{s_i}^{p-2} s_i}
    + \lambda_i \norm*{s_i}^{p-1}\\
    &\leq
    \frac{M}{2} \norm*{s_i}^2
    + \frac{\lambda_i}{2} \norm*{s_i}^{p-1}
    + \lambda_i \norm*{s_i}^{p-1},
    &&\by{\cref{eq:newton_taylor_grad,eq:newton_approx_stationarity_rewrite}}
  \end{alignat}
  which proves \cref{eq:lem_newton_gradnorm}.
\end{proof}

Using the above lemma, we derive a general upper bound on $\min_{1 \leq i \leq k} \norm*{\nabla f(x_i)}$ in terms of $\lambda_0,\dots,\lambda_{k-1}$.
The argument follows Steps 2 and 3 of the recipe in \cref{sec:recipe_general}.
\begin{lemma}
  \label{lem:newton_rate_general}
  Suppose that \cref{asm:newton} holds and let $p > 3$.
  Then, for all $k \geq 1$, we have
  \begin{align}
    \min_{1 \leq i \leq k} \norm*{\nabla f(x_i)}
    &\leq
    \frac{2}{k} \prn*{
      \sum_{i=0}^{k-1} \lambda_i
    }^{\frac{1}{p}}
    \prn*{
      12 \Delta + 2 M \sum_{i=0}^{k-1} \prn*{\frac{M}{\lambda_i}}^{\frac{3}{p-3}}
    }^{1 - \frac{1}{p}}.
    \label{eq:newton_rate_general}
  \end{align}
\end{lemma}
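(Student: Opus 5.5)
The plan follows Steps~2 and~3 of the recipe in \cref{sec:recipe_general}, starting from \cref{eq:lem_newton_decrease,eq:lem_newton_gradnorm}. The only new ingredient compared with gradient descent is the extra term $\frac{M}{2}\norm*{s_i}^2$ in the gradient bound. Throughout, write $\varphi_i \coloneqq \prn*{M/\lambda_i}^{\frac{3}{p-3}}$ and $S_k \coloneqq \sum_{i=0}^{k-1} \lambda_i \norm*{s_i}^p$.

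\emph{Step 2.} From \cref{eq:lem_newton_decrease}, I keep $\frac{\lambda_i}{12}\norm*{s_i}^p$ on the left-hand side:
\begin{align}
  f(x_{i+1}) - f(x_i) + \frac{\lambda_i}{12}\norm*{s_i}^p
  \leq
  \frac{1}{6}\prn*{M\norm*{s_i}^3 - \lambda_i\norm*{s_i}^p}
  \leq
  \frac{M}{6}\varphi_i .
\end{align}
The last inequality is \cref{lem:power_diff_bound} with $q = 3$, $b = M$, $a = \lambda_i$ and $t = \norm*{s_i}$. This is where $p > 3$ is essential. Summing over $0 \leq i < k$ and using $f(x_0) - f(x_k) \leq \Delta$ gives
\begin{align}
  S_k \leq 12\Delta + 2M\sum_{i=0}^{k-1}\varphi_i,
\end{align}
which is exactly the second factor in \cref{eq:newton_rate_general}.

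\emph{Step 3.} Both terms of \cref{eq:lem_newton_gradnorm} must be put in the Hölder form $\lambda_i^{1/p}(\cdot)^{1-1/p}$. For the regularization term this is immediate, as in \cref{eq:holder_example}: $\lambda_i\norm*{s_i}^{p-1} = \lambda_i^{1/p}\prn*{\lambda_i\norm*{s_i}^p}^{1-1/p}$. For the model-error term I split into two cases at the threshold $\norm*{s_i} = (M/\lambda_i)^{1/(p-3)}$.
\begin{itemize}
  \item Above the threshold, $M\norm*{s_i}^2 \leq \lambda_i\norm*{s_i}^{p-1}$.
  \item Below it, $M\norm*{s_i}^2 \leq M(M/\lambda_i)^{\frac{2}{p-3}}$. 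Checking exponents gives the identity $M(M/\lambda_i)^{\frac{2}{p-3}} = \lambda_i^{1/p}\prn*{M\varphi_i}^{1-1/p}$.
\end{itemize}
Hence in both cases $M\norm*{s_i}^2 \leq \lambda_i^{1/p}\prn*{\lambda_i\norm*{s_i}^p + M\varphi_i}^{1-1/p}$. This yields the per-iteration bound
\begin{align}
  \norm*{\nabla f(x_{i+1})} \leq 2\lambda_i^{1/p}\prn*{\lambda_i\norm*{s_i}^p + M\varphi_i}^{1-1/p}.
\end{align}
Summing this and applying \cref{eq:holder_2} with $(\alpha,\beta) = (\frac1p, 1-\frac1p)$ bounds $\sum_{i=1}^{k}\norm*{\nabla f(x_i)}$ by
\begin{align}
  2\prn*{\sum_{i=0}^{k-1}\lambda_i}^{1/p}\prn*{S_k + M\sum_{i=0}^{k-1}\varphi_i}^{1-1/p}.
\end{align}
Dividing by $k$ then bounds the minimum.

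\emph{Main obstacle: constants.} The argument as written gives $12\Delta + 3M\sum_{i}\varphi_i$ inside the second factor, rather than the stated $12\Delta + 2M\sum_{i}\varphi_i$, because the leftover $M\varphi_i$ from Step~3 adds to the $2M\varphi_i$ from Step~2. To recover the exact constant I would first use the slack in \cref{lem:power_diff_bound}: its proof discards the factor $\frac{p-3}{p}\prn*{\frac{3}{p}}^{\frac{3}{p-3}} < 1$, so the Step~2 bound can be sharpened. I would also try tuning the case threshold and the retained fraction ($\frac{1}{12}$) so that the two contributions together are at most $2M\varphi_i$ up to the factor $2$ in front. If that fails, the structure of the argument is unchanged and only the numerical constants in \cref{eq:newton_rate_general} would need adjusting.
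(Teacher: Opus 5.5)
Your Step~2 matches the paper's, and your case-split inequality and the exponent identity are correct. However, as you observe yourself, the argument proves the lemma only with $12\Delta + 3M\sum_{i}\varphi_i$ in the second factor, so the stated inequality \cref{eq:newton_rate_general} is not established. The loss comes from the case split. It controls $M\norm*{s_i}^2$ through the \emph{sum} $\lambda_i\norm*{s_i}^p + M\varphi_i$, so after H\"older you need a bound on $S_k + M\sum_i\varphi_i$, whereas Step~2 bounds $S_k$ alone by $12\Delta + 2M\sum_i\varphi_i$.

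Your first remedy does not close this gap by itself. The exact maximum in \cref{lem:power_diff_bound} is $c_p M\varphi_i$ with $c_p = \frac{p-3}{p}\prn*{\frac{3}{p}}^{\frac{3}{p-3}}$, and $c_p \to 1$ as $p \to \infty$, so the requirement $1 + 2c_p \leq 2$ fails for large $p$. A $p$-dependent re-tuning can rescue your route: use $c_p \leq \frac12$ for $p \leq 6$, and for $p \geq 6$ retain $\frac{\lambda_i}{6}\norm*{s_i}^p$ together with the exact maximum. That argument is not in your proposal, though, and it is not needed.

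The missing idea is to bound the model-error term by a \emph{geometric mean} rather than a sum. There is an exact factorization
\[
  M\norm*{s_i}^2
  =
  \lambda_i^{\frac1p}\prn*{\lambda_i\norm*{s_i}^p}^{\frac2p}\prn*{M\varphi_i}^{1-\frac3p}.
\]
The three-term H\"older inequality \cref{eq:holder_3} with $(\alpha,\beta,\gamma) = (\frac1p,\frac2p,1-\frac3p)$ then gives
\[
  M\sum_{i=0}^{k-1}\norm*{s_i}^2
  \leq
  \prn*{\sum_{i=0}^{k-1}\lambda_i}^{\frac1p}
  S_k^{\frac2p}
  \prn*{M\sum_{i=0}^{k-1}\varphi_i}^{1-\frac3p}
  \leq
  \prn*{\sum_{i=0}^{k-1}\lambda_i}^{\frac1p}
  A_k^{1-\frac1p},
  \qquad
  A_k \coloneqq 12\Delta + 2M\sum_{i=0}^{k-1}\varphi_i.
\]
Here $S_k \leq A_k$ and $M\sum_i\varphi_i \leq A_k$ are used \emph{separately}, so the two contributions never add up. Combined with $\sum_i\lambda_i\norm*{s_i}^{p-1} \leq \prn*{\sum_i\lambda_i}^{1/p}A_k^{1-1/p}$, the coefficients $\frac12 + \frac32 = 2$ give exactly \cref{eq:newton_rate_general}; this is the paper's argument.

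If you prefer to keep your per-iteration format, apply weighted AM--GM to the same factorization. This gives $M\norm*{s_i}^2 \leq \lambda_i^{1/p}\prn*{\tfrac{2}{p-1}\lambda_i\norm*{s_i}^p + \tfrac{p-3}{p-1}M\varphi_i}^{1-1/p}$, a convex combination whose sum over $i$ is at most $A_k$, which also avoids the loss.
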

\begin{proof}
  We first derive an upper bound for $\sum_{i=0}^{k-1} \lambda_i \norm*{s_i}^p$.
  Rearranging \cref{eq:lem_newton_decrease} and applying \cref{lem:power_diff_bound} gives
  \begin{align}
    f(x_{i+1}) - f(x_i)
    + \frac{\lambda_i}{12} \norm*{s_i}^p
    \leq
    \frac{1}{6} \prn*{
      M \norm*{s_i}^3
      - \lambda_i \norm*{s_i}^p
    }
    \leq
    \frac{M}{6} \prn*{\frac{M}{\lambda_i}}^{\frac{3}{p-3}}.
  \end{align}
  Summing this bound over $0 \leq i < k$ and rearranging terms yields
  \begin{align}
    \sum_{i=0}^{k-1} \lambda_i \norm*{s_i}^p
    &\leq
    12 \Delta
    + 2 M \sum_{i=0}^{k-1} \prn*{\frac{M}{\lambda_i}}^{\frac{3}{p-3}}
    \eqqcolon A_k,
    \label{eq:newton_sum_lamsp}
  \end{align}
  where we have used $\sum_{i=0}^{k-1} \prn*{f(x_i) - f(x_{i+1})} = f(x_0) - f(x_k) \leq \Delta$.

  Next, summing \cref{eq:lem_newton_gradnorm} over $0 \leq i < k$ yields
  \begin{align}
    \sum_{i=1}^k \norm*{\nabla f(x_i)}
    \leq
    \frac{M}{2} \sum_{i=0}^{k-1} \norm*{s_i}^2
    + \frac{3}{2} \sum_{i=0}^{k-1} \lambda_i \norm*{s_i}^{p-1}.
    \label{eq:newton_sum_gradnorm_proof1}
  \end{align}
  The second sum on the right-hand side is bounded as in \cref{eq:holder_example}:
  \begin{align}
    \sum_{i=0}^{k-1} \lambda_i \norm*{s_i}^{p-1}
    &\leq
    \prn*{
      \sum_{i=0}^{k-1} \lambda_i
    }^{\frac{1}{p}}
    \prn*{
      \sum_{i=0}^{k-1} \lambda_i \norm*{s_i}^p
    }^{1 - \frac{1}{p}}
    \leq
    \prn*{
      \sum_{i=0}^{k-1} \lambda_i
    }^{\frac{1}{p}}
    A_k^{1 - \frac{1}{p}}.
  \end{align}
  Similarly, the first sum on the right-hand side of \cref{eq:newton_sum_gradnorm_proof1} is bounded by H\"older's inequality~\cref{eq:holder_3} with $(\alpha, \beta, \gamma) = (\frac{1}{p}, \frac{2}{p}, 1 - \frac{3}{p})$ as follows:
  \begin{align}
    M \sum_{i=0}^{k-1} \norm*{s_i}^2
    &\leq
    \prn*{
      \sum_{i=0}^{k-1} \lambda_i
    }^{\!\frac{1}{p}}
    \!
    \prn*{
      \sum_{i=0}^{k-1} \lambda_i \norm*{s_i}^p
    }^{\!\frac{2}{p}}
    \!
    \prn*{
      M \sum_{i=0}^{k-1} \prn*{\frac{M}{\lambda_i}}^{\frac{3}{p-3}}
    }^{\!\!1 - \frac{3}{p}}
    \leq
    \prn*{
      \sum_{i=0}^{k-1} \lambda_i
    }^{\!\frac{1}{p}}
    A_k^{1 - \frac{1}{p}},
  \end{align}
  where the second inequality uses \cref{eq:newton_sum_lamsp} and the bound $M \sum_{i=0}^{k-1} \prn[\big]{\frac{M}{\lambda_i}}^{\frac{3}{p-3}} \leq A_k$, which follows from the definition of $A_k$ in \cref{eq:newton_sum_lamsp}.
  Plugging these bounds into \cref{eq:newton_sum_gradnorm_proof1} gives
  \begin{align}
    \sum_{i=1}^k \norm*{\nabla f(x_i)}
    &\leq
    2 \prn*{
      \sum_{i=0}^{k-1} \lambda_i
    }^{\frac{1}{p}}
    A_k^{1 - \frac{1}{p}}.
  \end{align}
  Using $\min_{1 \leq i \leq k} \norm*{\nabla f(x_i)} \leq \frac{1}{k} \sum_{i=1}^k \norm*{\nabla f(x_i)}$ completes the proof.
\end{proof}

Now we specify $(\lambda_k)_{k \in \N}$ to obtain explicit convergence rates.
\begin{theorem}
  \label{thm:newton_rate}
  Suppose that \cref{asm:newton} holds.
  Let $p > 3$ and $c_\lambda > 0$ be arbitrary constants.
  Fix an integer $K \geq 1$ and set $\lambda_k = c_\lambda K^{\frac{p-3}{3}}$ for all $0 \leq k < K$.
  Then, the following holds:
  \begin{align}
    \min_{1 \leq i \leq K} \norm*{\nabla f(x_i)}
    &\leq
    \frac{2 c_\lambda^{\frac{1}{p}}}{K^{2/3}}
    \prn[\bigg]{
      12 \Delta
      + 2 M \prn*{\frac{M}{c_\lambda}}^{\frac{3}{p-3}}
    }^{1 - \frac{1}{p}}
    =
    \O \prn*{K^{-2/3}}.
    \label{eq:newton_rate}
  \end{align}
\end{theorem}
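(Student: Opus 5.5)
The plan is to specialize the general bound of \cref{lem:newton_rate_general} with $k = K$ and the constant schedule $\lambda_i = c_\lambda K^{\frac{p-3}{3}}$ for $0 \leq i < K$. There are two sums in \cref{eq:newton_rate_general}, and I would evaluate each exactly, since both become trivial under a constant schedule.

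The first sum is
\begin{align}
  \sum_{i=0}^{K-1} \lambda_i = c_\lambda K^{1 + \frac{p-3}{3}} = c_\lambda K^{\frac{p}{3}},
\end{align}
so its $\frac{1}{p}$-th power is $c_\lambda^{\frac{1}{p}} K^{\frac{1}{3}}$.

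For the second sum, note that $\prn*{\frac{M}{\lambda_i}}^{\frac{3}{p-3}} = \prn*{\frac{M}{c_\lambda}}^{\frac{3}{p-3}} K^{-1}$. This is exactly why the exponent of $K$ in $\lambda_k$ was chosen as $\frac{p-3}{3}$. Summing $K$ identical terms then gives
\begin{align}
  \sum_{i=0}^{K-1} \prn*{\frac{M}{\lambda_i}}^{\frac{3}{p-3}} = \prn*{\frac{M}{c_\lambda}}^{\frac{3}{p-3}}.
\end{align}
The bracket in \cref{eq:newton_rate_general} therefore becomes the $K$-independent quantity $12\Delta + 2M\prn*{\frac{M}{c_\lambda}}^{\frac{3}{p-3}}$.

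Substituting both evaluations into \cref{eq:newton_rate_general}, the prefactor becomes $\frac{2}{K} \cdot c_\lambda^{\frac{1}{p}} K^{\frac{1}{3}} = 2 c_\lambda^{\frac{1}{p}} K^{-\frac{2}{3}}$. This gives exactly \cref{eq:newton_rate}, and the $\O(K^{-2/3})$ claim follows because the remaining factor does not depend on $K$.

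There is no real obstacle here; the only step requiring care is the exponent bookkeeping, namely checking that $\frac{p-3}{3} \cdot \frac{3}{p-3} = 1$ and $\prn*{1 + \frac{p-3}{3}} \cdot \frac{1}{p} = \frac{1}{3}$.
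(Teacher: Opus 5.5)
Your proposal is correct and follows the paper's proof exactly: take $k = K$ in \cref{lem:newton_rate_general}, evaluate $\sum_{i=0}^{K-1} \lambda_i = c_\lambda K^{\frac{p}{3}}$ and $\sum_{i=0}^{K-1} \prn*{\frac{M}{\lambda_i}}^{\frac{3}{p-3}} = \prn*{\frac{M}{c_\lambda}}^{\frac{3}{p-3}}$, and substitute. Your exponent bookkeeping is right, and only $\lambda_0,\dots,\lambda_{K-1}$ enter the bound, so the finite schedule causes no issue.
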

\begin{proof}
  When $\lambda_k = c_\lambda K^{\frac{p-3}{3}}$, the sums in \cref{eq:newton_rate_general} are evaluated as
  \begin{align}
    \sum_{i=0}^{K-1} \lambda_i
    =
    c_\lambda K^{\frac{p}{3}},\quad
    \sum_{i=0}^{K-1} \prn*{\frac{M}{\lambda_i}}^{\frac{3}{p-3}}
    &=
    \prn*{\frac{M}{c_\lambda}}^{\frac{3}{p-3}}.
  \end{align}
  Substituting these equations into \cref{eq:newton_rate_general} completes the proof.
\end{proof}

The bound \cref{eq:newton_rate} provides the complexity bound $\O(\epsilon^{-3/2})$.
If $M$ and $\Delta$ are known, setting $c_\lambda = \Theta \prn[\big]{\Delta^{\frac{3-p}{3}} M^{\frac{p}{3}}}$ in \cref{eq:newton_rate} yields the following bound:
\begin{align}
  \min_{1 \leq i \leq K} \norm*{\nabla f(x_i)}
  &\leq
  \O \prn[\bigg]{ \frac{\Delta^{2/3} M^{1/3}}{K^{2/3}} }.
\end{align}
The resulting complexity is $\O(\Delta \sqrt{M} \epsilon^{-3/2})$, matching the lower bound~\citep{carmon2020lower} in its dependence on $\Delta$, $M$, and $\epsilon$.

\paragraph{Related work.}
The seminal cubic-regularized Newton method of \citet{nesterov2006cubic} achieves the complexity $\O(\epsilon^{-3/2})$ for functions with Lipschitz-continuous Hessians.
This line of work has been extended in several directions~\citep{cartis2011adaptive,cartis2011adaptive2,curtis2019inexact,birgin2017worst,grapiglia2017regularized,cartis2019universal,curtis2023worst,gratton2023convergence}.
\citet{grapiglia2017regularized} study regularized Newton methods with regularization exponents $p \in [2, 3]$ for H\"older-continuous Hessians.
\citet{cartis2019universal} show that the $\O(\epsilon^{-3/2})$ complexity can be achieved with $p > 3$, but their framework uses trial-point acceptance tests.
\citet{gratton2023convergence} establish the same complexity bound for the case $p = 3$, using adaptive regularization parameters without acceptance tests.
Our analysis establishes a bound for arbitrary regularization parameters $(\lambda_k)_{k \in \N}$ and exponents $p > 3$, as in \cref{lem:newton_rate_general}.
This result provides a parameter-free guarantee with regularization parameters fixed in advance.

\section{Gauss--Newton method}
\label{sec:gn}
This section applies the higher-order regularization recipe to the Gauss--Newton method.
The Gauss--Newton method is designed for nonlinear least-squares problems of the form
\begin{align}
  \min_{x \in \R^d}\ 
  \set*{
    f(x)
    \coloneqq
    \frac{1}{2} \norm*{F(x)}^2
  },
  \label{eq:gn_problem}
\end{align}
where $F \colon \R^d \to \R^n$ is a differentiable function.
Let $J(x) \in \R^{n \times d}$ be the Jacobian matrix of $F$ at $x$.

\subsection{Algorithm and assumptions}
We compute the step $s_k \coloneqq x_{k+1} - x_k$ by approximately solving the following subproblem based on the Gauss--Newton approximation $\norm*{F(x_k+s)}^2 \simeq \norm*{F_k + J_k s}^2$:
\begin{align}
  \min_{s \in \R^d}\ 
  \set*{
    m_k(s)
    \coloneqq
    \frac{1}{2} \norm*{F_k + J_k s}^2
    + \frac{\lambda_k}{p} \norm*{s}^p
  },
  \label{eq:gn_subproblem}
\end{align}
where $F_k \coloneqq F(x_k)$, $J_k \coloneqq J(x_k)$, $p > 2$, and $\lambda_k > 0$.
Nesterov's accelerated gradient method~\citep{nesterov1983method} is a natural choice for solving this subproblem.
For this subproblem, the method achieves an $\O(t^{-2p/(p-2)})$ convergence rate~\citep{nesterov2022inexact,roulet2017sharpness}, where $t$ is the number of iterations.
This rate is known to be optimal~\citep{thomsen2024complexity}.
The algorithm in this section also allows for inexact solutions to the subproblem~\cref{eq:gn_subproblem}.
Unlike in the previous section, we assume only that $s_k$ satisfies the approximate first-order optimality condition for the subproblem:
\begin{align}
  \norm*{\nabla m_k(s_k)}
  \leq
  \frac{\lambda_k}{2} \norm*{s_k}^{p-1}.
  \label{eq:gn_subproblem_inexactness}
\end{align}

For the standard choice $p = 2$, the method based on subproblem~\cref{eq:gn_subproblem} is often referred to as the Levenberg--Marquardt method~\citep{levenberg1944method,marquardt1963algorithm}, and several choices of $\lambda_k$ have been studied.
One common choice is to set $\lambda_k$ proportional to $\norm*{F_k}$ \citep{marumo2023majorization,fan2003modified}.
Following this convention, we write
\begin{align}
  \lambda_k = 2 \mu_k \norm*{F_k}
  \label{eq:gn_lambda_mu}
\end{align}
with $\mu_k > 0$ throughout this section.
This is only a change of notation, but it is useful because $\mu_k$ will play the same role here as $\lambda_k$ does in the general recipe of \cref{sec:recipe}.

For the analysis, we use the following standard assumption.
\begin{assumption}
  \label{asm:gn}
  Let $L, \sigma > 0$ be constants.
  \begin{enuminasm}
    \item \label{asm:gn_lip_jac}
    $\norm*{J(x) - J(y)} \leq L \norm*{x - y}$ for all $x, y \in \R^d$.
    \item \label{asm:gn_bdd_jac}
    $\norm*{J(x)} \leq \sigma$ for all $x \in \R^d$.
  \end{enuminasm}
\end{assumption}
Under \cref{asm:gn_lip_jac}, the following standard inequality holds:
\begin{align}
  \norm*{F_{k+1} - F_k - J_k s_k}
  \leq
  \frac{L}{2} \norm*{s_k}^2.
  \label{eq:gn_F_taylor_bound}
\end{align}
We may assume without loss of generality that $\norm*{F_k} > 0$ for all $k \in \N$.
Indeed, if $\norm*{F_k} = 0$ for some $k$, then $x_k$ is a global minimizer of $f$, and the algorithm can be terminated.

\subsection{Complexity analysis}
As the first step of the recipe, we derive upper bounds on $\norm*{F_{i+1}} - \norm*{F_i}$ and $\norm*{\nabla f(x_{i+1})}$ in terms of $\mu_i$ and $\norm*{s_i}$.
Here we work with $\norm*{F_i}$ rather than $f(x_i) = \frac{1}{2} \norm*{F_i}^2$ because \cref{eq:gn_F_taylor_bound} gives a bound for the residual $F$ itself.
\begin{lemma}
  Suppose that \cref{asm:gn} holds and let $p > 2$.
  Then, for all $i \in \N$, we have
  \begin{align}
    \norm*{F_{i+1}} - \norm*{F_i}
    &\leq
    \frac{L}{2} \norm*{s_i}^2 - \mu_i \norm*{s_i}^p,
    \label{eq:gn_F_upperbound}\\
    \norm*{\nabla f(x_{i+1})}
    &\leq
    \norm*{F_i} \prn*{
      L \norm*{s_i}
      + 3 \mu_i \norm*{s_i}^{p-1}
    }
    + \frac{L\sigma}{2} \norm*{s_i}^2.
    \label{eq:gn_grad_upperbound}
  \end{align}
\end{lemma}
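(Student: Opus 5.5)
The plan is to work directly with the first-order information of the subproblem~\cref{eq:gn_subproblem}. Its gradient is
\begin{align}
  \nabla m_i(s) = J_i^\top (F_i + J_i s) + \lambda_i \norm*{s}^{p-2} s,
\end{align}
with $\lambda_i = 2\mu_i\norm*{F_i}$ by \cref{eq:gn_lambda_mu}. Both inequalities will come from combining the inexactness condition~\cref{eq:gn_subproblem_inexactness} with the Taylor bound~\cref{eq:gn_F_taylor_bound}. A by-product of the first part, $\norm*{F_i + J_i s_i} \leq \norm*{F_i}$, will be reused in the second.

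For \cref{eq:gn_F_upperbound}, I would first apply the triangle inequality and~\cref{eq:gn_F_taylor_bound}:
\begin{align}
  \norm*{F_{i+1}} \leq \norm*{F_i + J_i s_i} + \frac{L}{2}\norm*{s_i}^2 .
\end{align}
It then remains to show $\norm*{F_i + J_i s_i} \leq \norm*{F_i} - \mu_i \norm*{s_i}^p$.

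Let $h(s) \coloneqq \frac12\norm*{F_i + J_i s}^2$. This function is convex, so $h(0) \geq h(s_i) - \inner*{\nabla h(s_i)}{s_i}$. Moreover,
\begin{align}
  \inner*{\nabla h(s_i)}{s_i} = \inner*{\nabla m_i(s_i)}{s_i} - \lambda_i\norm*{s_i}^p \leq -\frac{\lambda_i}{2}\norm*{s_i}^p ,
\end{align}
where the inequality uses Cauchy--Schwarz and~\cref{eq:gn_subproblem_inexactness}. Combining the two gives
\begin{align}
  \norm*{F_i + J_i s_i}^2 \leq \norm*{F_i}^2 - 2\mu_i\norm*{F_i}\norm*{s_i}^p .
\end{align}
Set $a = \norm*{F_i}$ and $b = \mu_i\norm*{s_i}^p$. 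Then $\sqrt{a^2 - 2ab} \leq a - b$, because $(a-b)^2 = a^2 - 2ab + b^2$ and $a - b \geq 0$. The latter holds since nonnegativity of the left-hand side forces $b \leq a/2$. This proves \cref{eq:gn_F_upperbound}.

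For \cref{eq:gn_grad_upperbound}, use $\nabla f(x_{i+1}) = J_{i+1}^\top F_{i+1}$ and split it as
\begin{align}
  J_{i+1}^\top F_{i+1}
  = J_{i+1}^\top (F_{i+1} - F_i - J_i s_i)
  + (J_{i+1} - J_i)^\top (F_i + J_i s_i)
  + J_i^\top (F_i + J_i s_i).
\end{align}
The three terms are bounded as follows.
\begin{itemize}
  \item The first is at most $\frac{L\sigma}{2}\norm*{s_i}^2$, by \cref{asm:gn_bdd_jac} and~\cref{eq:gn_F_taylor_bound}.
  \item The second is at most $L\norm*{s_i}\norm*{F_i}$, by \cref{asm:gn_lip_jac} and the bound $\norm*{F_i + J_i s_i} \leq \norm*{F_i}$ from the first part.
  \item The third equals $\nabla m_i(s_i) - \lambda_i\norm*{s_i}^{p-2}s_i$. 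Its norm is therefore at most $\frac{3}{2}\lambda_i\norm*{s_i}^{p-1} = 3\mu_i\norm*{F_i}\norm*{s_i}^{p-1}$.
\end{itemize}
Summing these bounds yields \cref{eq:gn_grad_upperbound}.

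The main obstacle is the first part. The naive route bounds $\norm*{F_i + J_i s_i}^2$ by $m_i(s_i)$, but this fails because $s_i$ is only an inexact stationary point, so there is no guarantee that $m_i(s_i) \leq m_i(0)$. The convexity argument above sidesteps this. The subsequent square-root step is where the scaling $\lambda_i = 2\mu_i\norm*{F_i}$ is needed: it is exactly what turns the squared-residual decrease into a linear-in-$\norm*{F_i}$ decrease.
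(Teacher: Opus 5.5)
Your proof is correct and follows the paper's argument essentially step for step. It uses the same triangle-inequality bound for $\|F_{i+1}\|$, the same $\sqrt{a^2-2ab}\le a-b$ step, the same three-term decomposition of $J_{i+1}^\top F_{i+1}$, and your convexity inequality for $h$ is just the paper's identity $\|F_i+J_is_i\|^2-\|F_i\|^2+\|J_is_i\|^2 = 2\langle J_i^\top(F_i+J_is_i), s_i\rangle$ with the nonnegative term $\|J_is_i\|^2$ dropped.
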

The proof of this lemma is inspired by existing analyses of the Levenberg--Marquardt method \citep[Lemmas~3.1 and 4.1]{marumo2025accelerated}.
\begin{proof}
  Since $\nabla m_i(s) = J_i^\top (F_i + J_i s) + \lambda_i \norm*{s}^{p-2} s$, condition~\cref{eq:gn_subproblem_inexactness} can be rewritten as
  \begin{align}
    \norm[\big]{J_i^\top (F_i + J_i s_i) + \lambda_i \norm*{s_i}^{p-2} s_i}
    \leq
    \frac{\lambda_i}{2} \norm*{s_i}^{p-1}.
    \label{eq:gn_subproblem_inexactness_rewrite}
  \end{align}
  We have
  \begin{alignat}{2}
    \norm*{F_i + J_i s_i}^2 - \norm*{F_i}^2
    &\leq
    \norm*{F_i + J_i s_i}^2 - \norm*{F_i}^2 + \norm*{J_i s_i}^2\\
    &=
    2 \inner*{J_i^\top (F_i + J_i s_i) + \lambda_i \norm*{s_i}^{p-2} s_i}{s_i}
    - 2 \lambda_i \norm*{s_i}^p\\
    &\leq
    \lambda_i \norm*{s_i}^p
    - 2 \lambda_i \norm*{s_i}^p
    =
    - \lambda_i \norm*{s_i}^p
    =
    - 2 \mu_i \norm*{F_i} \norm*{s_i}^p,
    \label{eq:gn_model_decrease}
  \end{alignat}
  where the second inequality follows from Cauchy--Schwarz and \cref{eq:gn_subproblem_inexactness_rewrite}.
  Rearranging this inequality yields
  \begin{align}
    \norm*{F_i + J_i s_i}
    \leq
    \sqrt{ \norm*{F_i}^2 - 2 \mu_i \norm*{F_i} \norm*{s_i}^p }
    =
    \norm*{F_i} \sqrt{ 1 - 2 \mu_i \frac{\norm*{s_i}^p}{\norm*{F_i}} }
    \leq
    \norm*{F_i} - \mu_i \norm*{s_i}^p,
    \label{eq:gn_F_Js_upperbound}
  \end{align}
  where the last inequality holds because $\sqrt{1 - 2t} \leq 1 - t$ whenever $1 - 2t \geq 0$.
  Using the triangle inequality and \cref{eq:gn_F_taylor_bound} gives
  \begin{align}
    \norm*{F_{i+1}}
    &\leq
    \norm*{F_i + J_i s_i}
    + \norm*{F_{i+1} - F_i - J_i s_i}
    \leq
    \norm*{F_i + J_i s_i}
    + \frac{L}{2} \norm*{s_i}^2.
    \label{eq:gn_Fk1_upperbound}
  \end{align}
  Plugging \cref{eq:gn_F_Js_upperbound} into this bound gives the first result~\cref{eq:gn_F_upperbound}.

  For the second result, we decompose $\norm*{\nabla f(x_{i+1})}$ as follows:
  \begin{align}
    \norm*{\nabla f(x_{i+1})}
    &=
    \norm[\big]{J_{i+1}^\top F_{i+1}}\\
    &=
    \norm[\big]{
      J_{i+1}^\top \prn*{F_{i+1} - F_i - J_i s_i}
      + \prn*{J_{i+1} - J_i}^\top \prn*{F_i + J_i s_i}
      + J_i^\top (F_i + J_i s_i)
    }\\
    &\leq
    \norm*{J_{i+1}} \norm*{F_{i+1} - F_i - J_i s_i}
    + \norm*{J_{i+1} - J_i} \norm*{F_i + J_i s_i}
    + \norm[\big]{J_i^\top (F_i + J_i s_i)}.
  \end{align}
  We bound the first four norms using \cref{asm:gn_bdd_jac}, \cref{eq:gn_F_taylor_bound}, \cref{asm:gn_lip_jac}, and \cref{eq:gn_F_Js_upperbound}, respectively:
  \begin{align}
    \norm*{\nabla f(x_{i+1})}
    &\leq
    \sigma \cdot \prn*{\frac{L}{2} \norm*{s_i}^2}
    + \prn*{L \norm*{s_i}} \cdot \prn*{\norm*{F_i} - \mu_i \norm*{s_i}^p}
    + \norm[\big]{J_i^\top (F_i + J_i s_i)}\\
    &\leq
    \frac{L\sigma}{2} \norm*{s_i}^2
    + L \norm*{F_i} \norm*{s_i}
    + \norm[\big]{J_i^\top (F_i + J_i s_i)}.
  \end{align}
  The last term is bounded using \cref{eq:gn_subproblem_inexactness_rewrite} as follows:
  \begin{align}
    \norm[\big]{J_i^\top (F_i + J_i s_i)}
    \leq
    \norm[\big]{J_i^\top (F_i + J_i s_i) + \lambda_i \norm*{s_i}^{p-2} s_i}
    + \lambda_i \norm*{s_i}^{p-1}
    &\leq
    \frac{3}{2} \lambda_i \norm*{s_i}^{p-1}\\
    &=
    3 \mu_i \norm*{F_i} \norm*{s_i}^{p-1},
  \end{align}
  where the last equality uses \cref{eq:gn_lambda_mu}.
  Plugging this bound into the previous inequality completes the proof of \cref{eq:gn_grad_upperbound}.
\end{proof}

Using the above lemma, we derive a general upper bound on $\min_{1 \leq i \leq k} \norm*{\nabla f(x_i)}$ in terms of $\mu_0,\dots,\mu_{k-1}$.
The argument largely follows Steps~2 and 3 of the recipe in \cref{sec:recipe_general}.
One difference is that \cref{eq:gn_grad_upperbound} contains $\norm*{F_i}$ in addition to $\mu_i$ and $\norm*{s_i}$, so we also need to bound $\norm*{F_i}$.
For this reason, in Step~2 we bound $2\norm*{F_k} + \sum_{i=0}^{k-1} \mu_i \norm*{s_i}^p$ rather than only $\sum_{i=0}^{k-1} \mu_i \norm*{s_i}^p$.

\begin{lemma}
  \label{lem:gn_rate_general}
  Suppose that \cref{asm:gn} holds and let $p > 2$.
  Then, for all $k \geq 1$, we have
  \begin{align}
    \min_{1 \leq i \leq k} \norm*{\nabla f(x_i)}
    &\leq
    \frac{2 A_k^{2 - \frac{1}{p}}}{k} \prn*{\sum_{i=0}^{k-1} \mu_i}^{\frac{1}{p}}
    + \frac{\sigma A_k}{2 k},
  \end{align}
  where
  \begin{align}
    A_k
    \coloneqq
    2 \norm*{F_0} + L \sum_{i=0}^{k-1} \prn*{\frac{L}{\mu_i}}^{\frac{2}{p-2}}.
    \label{eq:gn_def_Sk}
  \end{align}
\end{lemma}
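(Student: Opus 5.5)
We follow Steps~2 and~3 of the recipe in \cref{sec:recipe_general}, with $\norm*{F_i}$ playing the role of the objective and $\mu_i$ that of $\lambda_i$. For Step~2, rearrange \cref{eq:gn_F_upperbound} as
\begin{align}
  \norm*{F_{i+1}} - \norm*{F_i} + \frac{\mu_i}{2} \norm*{s_i}^p
  \leq
  \frac{1}{2} \prn*{L \norm*{s_i}^2 - \mu_i \norm*{s_i}^p}
  \leq
  \frac{L}{2} \prn*{\frac{L}{\mu_i}}^{\frac{2}{p-2}},
\end{align}
where the last step is \cref{lem:power_diff_bound} with $q = 2$. Summing over $0 \leq i < j$ and multiplying by $2$ gives, for every $0 \leq j \leq k$,
\begin{align}
  2 \norm*{F_j} + \sum_{i=0}^{j-1} \mu_i \norm*{s_i}^p \leq A_j \leq A_k .
\end{align}
Since $A_j$ is nondecreasing in $j$, we obtain both $\norm*{F_i} \leq A_k/2$ for all $0 \leq i \leq k$ and $\sum_{i=0}^{k-1} \mu_i \norm*{s_i}^p \leq A_k$.

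For Step~3, sum \cref{eq:gn_grad_upperbound} over $0 \leq i < k$ and replace each $\norm*{F_i}$ by $A_k/2$. This gives
\begin{align}
  \sum_{i=1}^k \norm*{\nabla f(x_i)}
  \leq
  \frac{A_k}{2} \prn*{L \sum_{i=0}^{k-1} \norm*{s_i} + 3 \sum_{i=0}^{k-1} \mu_i \norm*{s_i}^{p-1}}
  + \frac{L \sigma}{2} \sum_{i=0}^{k-1} \norm*{s_i}^2 .
\end{align}
We bound the three sums by H\"older's inequality.
\begin{itemize}
  \item The term $\sum_i \mu_i \norm*{s_i}^{p-1}$ is handled exactly as in \cref{eq:holder_example}, giving at most $(\sum_i \mu_i)^{1/p} A_k^{1-1/p}$.
  \item For $L \sum_i \norm*{s_i}$, write $L\norm*{s_i} = \mu_i^{1/p} (\mu_i \norm*{s_i}^p)^{1/p} \bigl(L (L/\mu_i)^{2/(p-2)}\bigr)^{1-2/p}$. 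Checking the exponent of $\mu_i$: $\frac{2}{p} - \frac{2}{p-2}\cdot\frac{p-2}{p} = 0$. Checking $L$: $(1 + \frac{2}{p-2})\frac{p-2}{p} = 1$. Applying \cref{eq:holder_3} with $(\alpha,\beta,\gamma) = (\frac1p,\frac1p,1-\frac2p)$ gives at most $(\sum_i \mu_i)^{1/p} A_k^{1-1/p}$.
\end{itemize}
Hence the first group is at most $\frac{A_k}{2}\cdot 4 (\sum_i \mu_i)^{1/p} A_k^{1-1/p} = 2 A_k^{2-1/p} (\sum_i\mu_i)^{1/p}$, which matches the claimed leading term.

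The remaining term $\frac{L\sigma}{2}\sum_i \norm*{s_i}^2$ must be bounded by $\frac{\sigma A_k}{2}$, i.e., we need $L \sum_{i=0}^{k-1} \norm*{s_i}^2 \leq A_k$. Apply \cref{lem:power_diff_bound} differently here: from $\norm*{F_{i+1}} - \norm*{F_i} \leq \frac{L}{2}\norm*{s_i}^2 - \mu_i\norm*{s_i}^p$ alone this does not follow. Instead, use the elementary split
\begin{align}
  L \norm*{s_i}^2 \leq \mu_i \norm*{s_i}^p + \bigl(L\norm*{s_i}^2 - \mu_i \norm*{s_i}^p\bigr) \leq \mu_i \norm*{s_i}^p + L (L/\mu_i)^{2/(p-2)} .
\end{align}
Summing this, the right-hand side is bounded by the Step~2 quantity, which is at most $A_k$. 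This requires the constant $2\norm*{F_0}$ to absorb the $\mu_i\norm*{s_i}^p$ sum with the right factor, so we will track constants in Step~2 carefully.

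Dividing by $k$ and using $\min_{1 \leq i \leq k} \norm*{\nabla f(x_i)} \leq \frac1k\sum_{i=1}^k \norm*{\nabla f(x_i)}$ finishes the proof. The main obstacle is this $\sigma$ term: getting $L\sum_i\norm*{s_i}^2 \leq A_k$ with exactly the stated constants. If the direct split falls short, we will instead use \cref{lem:power_diff_bound} with a weighted split of $\mu_i\norm*{s_i}^p$ and re-balance how much of the regularization is retained in Step~2.
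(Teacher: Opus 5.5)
Your Step~2, the bound $\norm*{F_i} \leq A_k/2$, and the H\"older estimates for $L\sum_i \norm*{s_i}$ and $\sum_i \mu_i \norm*{s_i}^{p-1}$ coincide with the paper's proof. They correctly produce the leading term $2 A_k^{2-1/p} (\sum_i \mu_i)^{1/p}$.

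The gap is in the $\sigma$-term. Write $X_k \coloneqq \sum_{i=0}^{k-1} \mu_i \norm*{s_i}^p$ and $S_k \coloneqq L \sum_{i=0}^{k-1} (L/\mu_i)^{2/(p-2)}$. Then $A_k = 2\norm*{F_0} + S_k$, and Step~2 gives $X_k \leq A_k - 2\norm*{F_k}$. Your additive split yields only
\[
  L \sum_{i=0}^{k-1} \norm*{s_i}^2 \leq X_k + S_k \leq A_k + S_k - 2\norm*{F_k},
\]
which exceeds $A_k$ whenever $S_k > 2\norm*{F_k}$. Your assertion that this right-hand side is ``bounded by the Step~2 quantity'' is false, because the summand $S_k$ is not part of $2\norm*{F_k} + X_k$. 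As written, you obtain the second term as $\sigma A_k/k$ rather than $\sigma A_k/(2k)$.

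The fallback you sketch does not close this either. Re-balancing the fraction of regularization retained in Step~2 changes $A_k$ away from its stated definition. A weighted split $L t^2 \leq c\mu_i t^p + c^{-2/(p-2)} L (L/\mu_i)^{2/(p-2)}$ obtained from \cref{lem:power_diff_bound} sums to $c X_k + c^{-2/(p-2)} S_k$. Since $c + c^{-2/(p-2)} > 1$ for every $c > 0$, and $X_k$, $S_k$ can both be close to $A_k$ as far as the available inequalities show (e.g.\ when $\norm*{F_0}$ is small relative to $S_k$), no additive split of this kind can give $A_k$.

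The missing idea is to combine $X_k$ and $S_k$ multiplicatively, exactly as you already did for $L\sum_i \norm*{s_i}$. The same exponent bookkeeping shows
\[
  L\norm*{s_i}^2 = (\mu_i \norm*{s_i}^p)^{2/p} \bigl(L (L/\mu_i)^{2/(p-2)}\bigr)^{1-2/p}.
\]
H\"older's inequality \cref{eq:holder_2} with $(\alpha,\beta) = (\frac{2}{p}, 1 - \frac{2}{p})$ then gives $L \sum_i \norm*{s_i}^2 \leq X_k^{2/p} S_k^{1-2/p} \leq A_k$, a weighted geometric mean of two quantities each at most $A_k$. This is the paper's third H\"older bound, and with it the rest of your argument yields the lemma with the stated constants.
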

\begin{proof}
  We first derive an upper bound for $2 \norm*{F_k} + \sum_{i=0}^{k-1} \mu_i \norm*{s_i}^p$.
  Rearranging \cref{eq:gn_F_upperbound} and applying \cref{lem:power_diff_bound} gives
  \begin{align}
    \norm*{F_{i+1}} - \norm*{F_i}
    + \frac{\mu_i}{2} \norm*{s_i}^p
    &\leq
    \frac{1}{2} \prn*{
      L \norm*{s_i}^2 - \mu_i \norm*{s_i}^p
    }
    \leq
    \frac{L}{2} \prn*{\frac{L}{\mu_i}}^{\frac{2}{p-2}}.
  \end{align}
  Summing this bound over $0 \leq i < k$ and rearranging terms yields
  \begin{align}
    2 \norm*{F_k}
    + \sum_{i=0}^{k-1} \mu_i \norm*{s_i}^p
    &\leq
    2 \norm*{F_0}
    + L \sum_{i=0}^{k-1} \prn*{\frac{L}{\mu_i}}^{\frac{2}{p-2}}
    =
    A_k.
    \label{eq:gn_sum_Fk_sp_bound}
  \end{align}

  Next, summing \cref{eq:gn_grad_upperbound} over $0 \leq i < k$ gives
  \begin{align}
    \sum_{i=1}^k \norm*{\nabla f(x_i)}
    &\leq
    \prn*{\max_{0 \leq i < k} \norm*{F_i}}
    \prn*{
      L \sum_{i=0}^{k-1} \norm*{s_i}
      + 3 \sum_{i=0}^{k-1} \mu_i \norm*{s_i}^{p-1}
    }
    + \frac{L\sigma}{2} \sum_{i=0}^{k-1} \norm*{s_i}^2\\
    &\leq
    \frac{A_k}{2}
    \prn*{
      L \sum_{i=0}^{k-1} \norm*{s_i}
      + 3 \sum_{i=0}^{k-1} \mu_i \norm*{s_i}^{p-1}
    }
    + \frac{L\sigma}{2} \sum_{i=0}^{k-1} \norm*{s_i}^2,
    \label{eq:gn_sum_gradnorm_proof1}
  \end{align}
  where the second inequality uses $\norm*{F_i} \leq \frac{A_i}{2} \leq \frac{A_k}{2}$ from \cref{eq:gn_sum_Fk_sp_bound}.
  As in the proofs of \cref{lem:gd_rate_general,lem:newton_rate_general}, we bound the three sums using H\"older's inequality as follows:
  \begin{align}
    L &\sum_{i=0}^{k-1} \norm*{s_i}
    \leq
    \prn*{\sum_{i=0}^{k-1} \mu_i}^{\!\frac{1}{p}}
    \prn*{\sum_{i=0}^{k-1} \mu_i \norm*{s_i}^p}^{\!\frac{1}{p}}
    \prn*{\sum_{i=0}^{k-1} L \prn*{\frac{L}{\mu_i}}^{\frac{2}{p-2}}}^{\!\!1 - \frac{2}{p}}
    \leq
    \prn*{\sum_{i=0}^{k-1} \mu_i}^{\!\frac{1}{p}}
    A_k^{1 - \frac{1}{p}},\\
    &\sum_{i=0}^{k-1} \mu_i \norm*{s_i}^{p-1}
    \leq
    \prn*{\sum_{i=0}^{k-1} \mu_i}^{\!\frac{1}{p}}
    \prn*{\sum_{i=0}^{k-1} \mu_i \norm*{s_i}^p}^{\!\!1 - \frac{1}{p}}
    \leq
    \prn*{\sum_{i=0}^{k-1} \mu_i}^{\!\frac{1}{p}}
    A_k^{1 - \frac{1}{p}},\\
    L &\sum_{i=0}^{k-1} \norm*{s_i}^2
    \leq
    \prn*{\sum_{i=0}^{k-1} \mu_i \norm*{s_i}^p}^{\!\frac{2}{p}}
    \prn*{L \sum_{i=0}^{k-1} \prn*{\frac{L}{\mu_i}}^{\frac{2}{p-2}}}^{\!\!1 - \frac{2}{p}}
    \leq
    A_k,
  \end{align}
  where the second inequality in each line is obtained by applying the bounds $\sum_{i=0}^{k-1} \mu_i \norm*{s_i}^p \leq A_k$ and $L \sum_{i=0}^{k-1} \prn[\big]{\frac{L}{\mu_i}}^{\frac{2}{p-2}} \leq A_k$, which follow from \cref{eq:gn_sum_Fk_sp_bound}.
  Plugging these bounds into \cref{eq:gn_sum_gradnorm_proof1} yields
  \begin{align}
    \sum_{i=1}^k \norm*{\nabla f(x_i)}
    &\leq
    2 A_k^{2 - \frac{1}{p}}
    \prn*{\sum_{i=0}^{k-1} \mu_i}^{\!\frac{1}{p}}
    + \frac{\sigma}{2} A_k.
  \end{align}
  Using $\min_{1 \leq i \leq k} \norm*{\nabla f(x_i)} \leq \frac{1}{k} \sum_{i=1}^k \norm*{\nabla f(x_i)}$ completes the proof.
\end{proof}

Now we specify $(\mu_k)_{k \in \N}$ to obtain explicit convergence rates.
\begin{theorem}
  \label{thm:gn_rate}
  Suppose that \cref{asm:gn} holds.
  Let $p > 2$ and $c_\mu > 0$ be arbitrary constants.
  Fix an integer $K \geq 1$ and set $\mu_k = c_\mu K^{\frac{p}{2} - 1}$ for all $0 \leq k < K$.
  Then, the following holds:
  \begin{align}
    \min_{1 \leq i \leq K} \norm*{\nabla f(x_i)}
    &\leq
    \frac{2 c_\mu^{\frac{1}{p}} C^{2 - \frac{1}{p}}}{\sqrt{K}}
    + \frac{\sigma C}{2 K}
    =
    \O \prn*{K^{-1/2}},
    \quad\text{where}\quad
    C
    \coloneqq
    2 \norm*{F_0} + L \prn*{\frac{L}{c_\mu}}^{\frac{2}{p-2}}.
    \label{eq:gn_rate}
  \end{align}
\end{theorem}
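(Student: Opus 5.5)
The plan is to substitute the constant schedule $\mu_k = c_\mu K^{\frac{p}{2}-1}$ into \cref{lem:gn_rate_general} with $k = K$, in the same way \cref{thm:newton_rate} was obtained from \cref{lem:newton_rate_general}.

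First, I will evaluate the two sums in the lemma. For the plain sum,
\begin{align}
  \sum_{i=0}^{K-1} \mu_i = c_\mu K^{\frac{p}{2}}.
\end{align}
For the sum inside $A_K$, the exponent $\frac{2}{p-2}$ applied to $K^{\frac{p-2}{2}}$ gives exactly $K$, so
\begin{align}
  \sum_{i=0}^{K-1} \prn*{\frac{L}{\mu_i}}^{\frac{2}{p-2}}
  = K \prn*{\frac{L}{c_\mu}}^{\frac{2}{p-2}} K^{-1}
  = \prn*{\frac{L}{c_\mu}}^{\frac{2}{p-2}}.
\end{align}
This gives $A_K = 2\norm*{F_0} + L (L/c_\mu)^{\frac{2}{p-2}} = C$, and the scaling of $\mu_k$ was chosen precisely so that this cancellation occurs.

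Next, I will plug these values into the bound of \cref{lem:gn_rate_general}:
\begin{align}
  \min_{1 \leq i \leq K} \norm*{\nabla f(x_i)}
  \leq \frac{2 C^{2-\frac{1}{p}}}{K} \prn*{c_\mu K^{\frac{p}{2}}}^{\frac{1}{p}} + \frac{\sigma C}{2K}
  = \frac{2 c_\mu^{\frac{1}{p}} C^{2-\frac{1}{p}}}{\sqrt{K}} + \frac{\sigma C}{2K}.
\end{align}
This is exactly \cref{eq:gn_rate}. Since $C$ does not depend on $K$ and $K^{-1} \leq K^{-1/2}$ for $K \geq 1$, the right-hand side is $\O(K^{-1/2})$.

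No step here is a real obstacle; the argument is bookkeeping of exponents. The only point needing care is confirming that $(K^{\frac{p-2}{2}})^{\frac{2}{p-2}} = K$, so that the $K$ terms in the sum cancel, and that $(K^{\frac{p}{2}})^{\frac{1}{p}} = K^{1/2}$.
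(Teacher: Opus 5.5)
Your proposal is correct and is exactly the argument the paper has in mind: the paper omits this proof, stating only that it follows by substituting the schedule $\mu_k = c_\mu K^{\frac{p}{2}-1}$ into \cref{lem:gn_rate_general}. Your exponent bookkeeping, which gives $\sum_{i=0}^{K-1}\mu_i = c_\mu K^{p/2}$ and $A_K = C$, is the whole content of that substitution.
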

The proof follows by substituting the specified choice of $(\mu_k)_{k \in \N}$ into \cref{lem:gn_rate_general}, and is omitted.

The bound \cref{eq:gn_rate} provides the complexity bound $\O(\epsilon^{-2})$.
If $L$ is known, setting $c_\mu = \Theta \prn[\big]{\norm*{F_0}^{\frac{2-p}{2}} L^{\frac{p}{2}}}$ in \cref{eq:gn_rate} yields $C = \O(\norm*{F_0})$ and the following bound:
\begin{align}
  \min_{1 \leq i \leq K} \norm*{\nabla f(x_i)}
  &\leq
  \O \prn[\bigg]{ \frac{\norm*{F_0}^{3/2} \sqrt{L}}{\sqrt{K}} + \frac{\sigma \norm*{F_0}}{K} }.
\end{align}
This rate yields the complexity bound
\begin{align}
  \O \prn[\bigg]{
    \frac{L \norm*{F_0}^3}{\epsilon^2}
    + \frac{\sigma \norm*{F_0}}{\epsilon}
  },
\end{align}
which recovers the state-of-the-art bound~\citep{marumo2025accelerated} for least-squares problems of the form~\cref{eq:gn_problem}.
To the best of our knowledge, the optimality of this complexity bound remains open.

\paragraph{Related work.}
Gauss--Newton (GN) methods are designed for nonlinear least-squares problems of the form \cref{eq:gn_problem}.
For global complexity guarantees, they are typically combined with regularization or trust-region mechanisms.
\citet{ueda2010global} established an $\O(\epsilon^{-2})$ complexity bound for quadratically regularized GN methods under \cref{asm:gn}; related $\O(\epsilon^{-2})$ results were later obtained in \citep{zhao2016global,bellavia2018levenberg,bergou2020convergence,marumo2023majorization,marumo2025accelerated}.
Among them, \citet{marumo2025accelerated} made the dependence on problem parameters explicit and showed the advantage of GN methods over gradient descent.
Most of the above methods are parameter-free but rely on acceptance tests.
A different line of work has studied cubic regularization for GN methods, mainly for local convergence~\citep{bellavia2015strong}.
Our instantiation requires no acceptance test and achieves the same state-of-the-art complexity as~\citep{marumo2025accelerated}.

\section{SGD with mini-batches}
\label{sec:sgd}
This section applies the higher-order regularization recipe to stochastic gradient descent (SGD) with mini-batches.
For notational simplicity, we focus on the finite-sum setting:
\begin{align}
  \min_{x \in \R^d}\
  \set[\bigg]{
    f(x)
    \coloneqq
    \frac{1}{n} \sum_{i=1}^n f_i(x)
  },
  \label{eq:sgd_problem}
\end{align}
where each $f_i \colon \R^d \to \R$ is a differentiable function.
We also write $f(x) = \ex<i>{f_i(x)}$, where $\ex<i>{\cdot}$ denotes expectation with respect to the uniform distribution on $\set{1,\dots,n}$.
The arguments in this section extend directly to the expectation setting $f(x) = \ex<\xi \sim P>{f(x;\xi)}$ for a general distribution $P$, provided that an unbiased gradient estimator is available.

\subsection{Algorithm and assumptions}
We set the step $s_k \coloneqq x_{k+1} - x_k$ by
\begin{align}
  s_k
  =
  \argmin_{s \in \R^d}
  \set*{
    \inner*{g_k}{s}
    + \frac{\lambda_k}{p} \norm*{s}^p
  }
  =
  - \prn*{ \lambda_k \norm*{g_k}^{p-2} }^{-\frac{1}{p-1}} g_k,
  \label{eq:sgd_subproblem}
\end{align}
where $p > 2$, $\lambda_k > 0$, and
\begin{align}
  g_k
  \coloneqq
  \frac{1}{B} \sum_{i \in I_k} \nabla f_i(x_k)
  \label{eq:sgd_grad_estimator}
\end{align}
is the mini-batch gradient estimator.
Here, $I_k$ is a uniformly sampled subset of $\set{1,\dots,n}$ of size $B$.
Throughout the stochastic sections, whenever a prescribed mini-batch size exceeds $n$, we use the full batch and interpret the corresponding mini-batch gradient estimator as the full gradient.

For the analysis, we use the following standard assumptions.
\begin{assumption}
  \label{asm:sgd}
  Let $L, \sigma > 0$ be constants.
  \begin{enuminasm}
    \item
    \label{asm:sgd_lip_grad}
    $\norm*{\nabla f(x) - \nabla f(y)} \leq L \norm*{x - y}$ for all $x, y \in \R^d$.
    \item
    \label{asm:sgd_bounded_variance}
    $\ex[\big]<i>{\norm*{\nabla f_i(x) - \nabla f(x)}^2} \leq \sigma^2$ for all $x \in \R^d$.
  \end{enuminasm}
\end{assumption}
Let
\begin{align}
  e_k \coloneqq \norm*{g_k - \nabla f(x_k)}
  \label{eq:sgd_def_ek}
\end{align}
be the error of the gradient estimator.
Under \cref{asm:sgd_bounded_variance}, the definition of $g_k$ in \cref{eq:sgd_grad_estimator} gives
\begin{align}
  \ex*{e_k^2}
  \leq
  \frac{\sigma^2}{B}.
  \label{eq:sgd_error_variance}
\end{align}

\subsection{Complexity analysis}
As the first step, we derive upper bounds on $f(x_{i+1}) - f(x_i)$ and $\norm*{\nabla f(x_i)}$ in terms of $\lambda_i$, $\norm*{s_i}$, and $e_i$.
The argument is similar to \cref{lem:gd_bounds_f_grad}, but we need to account for the error $e_i$.
\begin{lemma}
  \label{lem:sgd_bounds_f_grad}
  Suppose that \cref{asm:sgd_lip_grad} holds and let $p > 2$.
  Then, for all $i \in \N$, we have
  \begin{align}
    f(x_{i+1}) - f(x_i)
    &\leq
    L \norm*{s_i}^2
    - \lambda_i \norm*{s_i}^p
    + \frac{e_i^2}{2 L},
    \label{eq:sgd_function_decrease}\\
    \norm*{\nabla f(x_i)}
    &\leq
    \lambda_i \norm*{s_i}^{p-1} + e_i.
    \label{eq:sgd_gradient_upperbound}
  \end{align}
\end{lemma}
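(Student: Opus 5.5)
The plan mirrors the proof of \cref{lem:gd_bounds_f_grad}, with an extra step to absorb the gradient-estimator error $e_i$. We start from the first-order optimality condition of the subproblem~\cref{eq:sgd_subproblem}, which reads
\begin{align}
  g_i = - \lambda_i \norm*{s_i}^{p-2} s_i .
\end{align}
It holds trivially when $g_i = \0$, since then $s_i = \0$. Taking norms gives $\norm*{g_i} = \lambda_i \norm*{s_i}^{p-1}$. The triangle inequality $\norm*{\nabla f(x_i)} \leq \norm*{g_i} + \norm*{\nabla f(x_i) - g_i}$, combined with the definition~\cref{eq:sgd_def_ek} of $e_i$, then yields~\cref{eq:sgd_gradient_upperbound} directly.

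For~\cref{eq:sgd_function_decrease}, we apply the descent lemma under \cref{asm:sgd_lip_grad}:
\begin{align}
  f(x_{i+1}) - f(x_i) \leq \inner*{\nabla f(x_i)}{s_i} + \frac{L}{2}\norm*{s_i}^2 .
\end{align}
We split the inner product as $\inner*{g_i}{s_i} + \inner*{\nabla f(x_i) - g_i}{s_i}$. By the optimality condition, the first term equals $-\lambda_i \norm*{s_i}^p$.

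The remaining cross term is the only genuinely new ingredient. We bound it by Cauchy--Schwarz and then Young's inequality with weight chosen to produce $\frac{e_i^2}{2L}$:
\begin{align}
  \inner*{\nabla f(x_i) - g_i}{s_i}
  \leq
  e_i \norm*{s_i}
  \leq
  \frac{e_i^2}{2L} + \frac{L}{2}\norm*{s_i}^2 .
\end{align}
Adding this to the $\frac{L}{2}\norm*{s_i}^2$ from the descent lemma gives $L\norm*{s_i}^2 - \lambda_i\norm*{s_i}^p + \frac{e_i^2}{2L}$, which is exactly~\cref{eq:sgd_function_decrease}.

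I expect no real obstacle. The only choice to get right is the Young weighting, which should make the quadratic coefficient exactly $L$ and the error coefficient exactly $\frac{1}{2L}$ as stated. Note also that the lemma is deterministic: it holds pathwise for every realization of $I_i$, so no expectations are needed at this step.
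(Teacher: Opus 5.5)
Your proposal is correct and matches the paper's proof essentially line for line: the optimality condition $g_i = -\lambda_i \norm*{s_i}^{p-2} s_i$ plus the triangle inequality gives the gradient bound, and the descent lemma, Cauchy--Schwarz on the cross term, and Young's inequality $e_i \norm*{s_i} \leq \frac{e_i^2}{2L} + \frac{L}{2}\norm*{s_i}^2$ give the decrease bound. Your remark that the lemma holds pathwise, with expectations entering only in the next lemma, also agrees with how the paper uses it.
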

\begin{proof}
  The first-order optimality condition of subproblem~\cref{eq:sgd_subproblem} gives
  \begin{align}
    g_i = - \lambda_i \norm*{s_i}^{p-2} s_i.
    \label{eq:sgd_subproblem_optimality}
  \end{align}
  Using the triangle inequality and this equation yields
  \begin{align}
    \norm*{\nabla f(x_i)}
    \leq
    \norm*{g_i} + e_i
    =
    \lambda_i \norm*{s_i}^{p-1} + e_i,
  \end{align}
  which proves \cref{eq:sgd_gradient_upperbound}.
  We have
  \begin{alignat}{2}
    f(x_{i+1}) - f(x_i)
    - \frac{L}{2} \norm*{s_i}^2
    &\leq
    \inner*{\nabla f(x_i)}{s_i}
    &\quad&\by{\cref{asm:sgd_lip_grad}}\\
    &=
    \inner*{g_i}{s_i} - \inner*{g_i - \nabla f(x_i)}{s_i}\\
    &\leq
    \inner*{g_i}{s_i} + e_i \norm*{s_i}
    &\quad&\by{Cauchy--Schwarz}\\
    &=
    - \lambda_i \norm*{s_i}^p + e_i \norm*{s_i}.
    &\quad&\by{\cref{eq:sgd_subproblem_optimality}}
  \end{alignat}
  Applying Young's inequality $e_i \norm*{s_i} \leq \frac{e_i^2}{2 L} + \frac{L}{2} \norm*{s_i}^2$ completes the proof of \cref{eq:sgd_function_decrease}.
\end{proof}

For $k \geq 1$, let $\tilde x_k$ be chosen uniformly at random from $x_0,\dots,x_{k-1}$, independently of all other randomness.
Using the above lemma, we derive a general upper bound on $\ex*{\norm*{\nabla f(\tilde x_k)}}$ in terms of $(\lambda_i)_{i=0}^{k-1}$.
The argument largely follows Steps~2 and 3 of the recipe in \cref{sec:recipe_general}.
Compared with the deterministic case, the analysis requires two additional ingredients.
First, in Step~2, we bound $\sum_{i=0}^{k-1} \ex*{\lambda_i \norm*{s_i}^p + e_i^2 / L}$ rather than only $\sum_{i=0}^{k-1} \ex*{\lambda_i \norm*{s_i}^p}$.
The additional term $e_i^2 / L$ will be used to control the error term $e_i$ in \cref{eq:sgd_gradient_upperbound}.
Second, in Step~3, after taking expectations in \cref{eq:sgd_gradient_upperbound}, we use Jensen's inequality to bound $\ex*{\norm*{\nabla f(x_i)}}$ in terms of $\ex*{\norm*{s_i}^p}$ and $\ex*{e_i^2}$.
\begin{lemma}
  \label{lem:sgd_rate_general}
  Suppose that \cref{asm:sgd} holds and let $p > 2$.
  Then, for all $k \geq 1$, we have
  \begin{align}
    \ex*{\norm*{\nabla f(\tilde x_k)}}
    &\leq
    \frac{2}{k}
    \prn*{
      \sum_{i=0}^{k-1} \lambda_i
    }^{\frac{1}{p}}
    \prn*{
      2 \Delta
      + 2 L \sum_{i=0}^{k-1} \prn*{\frac{2 L}{\lambda_i}}^{\frac{2}{p-2}}
      + \frac{2 k \sigma^2}{L B}
    }^{\frac{p-1}{p}}.
    \label{eq:sgd_rate_general}
  \end{align}
\end{lemma}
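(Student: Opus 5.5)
We follow Steps~2 and~3 of the recipe in \cref{sec:recipe_general}, with the modifications announced before the statement. For Step~2, we rewrite \cref{eq:sgd_function_decrease} by keeping half of the regularization term on the left:
\begin{align}
  f(x_{i+1}) - f(x_i) + \frac{\lambda_i}{2} \norm*{s_i}^p
  \leq
  \frac{1}{2} \prn*{ 2L \norm*{s_i}^2 - \lambda_i \norm*{s_i}^p } + \frac{e_i^2}{2L}
  \leq
  L \prn*{\frac{2L}{\lambda_i}}^{\frac{2}{p-2}} + \frac{e_i^2}{2L}.
\end{align}
Here the second inequality is \cref{lem:power_diff_bound} with $b = 2L$, $a = \lambda_i$, $q = 2$. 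We add $\frac{e_i^2}{2L}$ to both sides, multiply by $2$, and sum over $0 \leq i < k$. Telescoping with $f(x_0) - f(x_k) \leq \Delta$, taking expectations, and using $\ex{e_i^2} \leq \sigma^2/B$ from \cref{eq:sgd_error_variance} then gives
\begin{align}
  \sum_{i=0}^{k-1} \ex*{\lambda_i \norm*{s_i}^p + \frac{e_i^2}{L}}
  \leq
  2\Delta + 2L \sum_{i=0}^{k-1} \prn*{\frac{2L}{\lambda_i}}^{\frac{2}{p-2}} + \frac{2k\sigma^2}{LB}
  \eqqcolon A_k.
\end{align}
The $\lambda_i$ are deterministic, so they pass through the expectation. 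This matches the bracket in \cref{eq:sgd_rate_general}.

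For Step~3, we take expectations in \cref{eq:sgd_gradient_upperbound}. Since $\tilde x_k$ is uniform and independent, $\ex{\norm{\nabla f(\tilde x_k)}} = \frac{1}{k}\sum_{i=0}^{k-1} \ex{\norm{\nabla f(x_i)}}$. By Jensen's inequality applied to the concave maps $t \mapsto t^{(p-1)/p}$ and $t \mapsto t^{1/2}$, we have $\ex{\norm{s_i}^{p-1}} \leq (\ex{\norm{s_i}^p})^{(p-1)/p}$ and $\ex{e_i} \leq (\ex{e_i^2})^{1/2}$. H\"older's inequality \cref{eq:holder_2} with exponents $(\frac1p, 1-\frac1p)$, applied as in \cref{eq:holder_example} to the nonnegative numbers $\lambda_i$ and $\lambda_i \ex{\norm{s_i}^p}$, then yields
\begin{align}
  \sum_{i=0}^{k-1} \lambda_i \ex*{\norm*{s_i}^{p-1}}
  \leq
  \prn*{\sum_{i=0}^{k-1} \lambda_i}^{\frac1p} A_k^{1-\frac1p}.
\end{align}

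The main obstacle is the error term $\sum_i \ex{e_i}$, which must be put in the same form $(\sum_i \lambda_i)^{1/p} A_k^{1-1/p}$; this is what produces the factor $2$ in front. The plan is to write $e_i = \lambda_i^{1/p} \cdot \lambda_i^{-1/p} e_i$ and apply H\"older's inequality with $(\frac1p, 1-\frac1p)$, which gives
\begin{align}
  \sum_{i=0}^{k-1} \ex{e_i}
  \leq
  \prn*{\sum_{i=0}^{k-1} \lambda_i}^{\frac1p}
  \prn*{\sum_{i=0}^{k-1} \lambda_i^{-\frac{1}{p-1}} \ex{e_i}^{\frac{p}{p-1}}}^{1-\frac1p}.
\end{align}
It then remains to show $\lambda_i^{-1/(p-1)} \ex{e_i}^{p/(p-1)} \lesssim \ex{e_i^2}/L + L (2L/\lambda_i)^{2/(p-2)}$, so that the second factor is bounded by $A_k$. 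We plan to get this from weighted Young's inequality with exponents $\frac{2(p-1)}{p}$ and $\frac{2(p-1)}{p-2}$. Matching the exponents of $L$ and $\lambda_i$ there should be routine, but the absolute constant has to come out at most $1$. If it does not, the fallback is the cruder Cauchy--Schwarz bound $\sum_i \ex{e_i} \leq \sqrt{k \sum_i \ex{e_i^2}}$, followed by an argument relating $k$ to $\sum_i \lambda_i$ and $A_k$, possibly at the cost of a worse constant.

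Adding the two bounds gives $\sum_{i=0}^{k-1} \ex{\norm{\nabla f(x_i)}} \leq 2 \prn{\sum_{i=0}^{k-1} \lambda_i}^{1/p} A_k^{(p-1)/p}$, and dividing by $k$ completes the proof.
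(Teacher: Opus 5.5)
Your proposal is correct and follows the paper's proof step for step. The one difference is the error term: the paper bounds $\sum_{i} \sqrt{\ex{e_i^2}}$ in a single step using the three-sequence H\"older inequality \cref{eq:holder_3} with exponents $(\frac{1}{p}, \frac{1}{2}, \frac{p-2}{2p})$, applied to $\lambda_i$, $\ex{e_i^2}/L$, and $L (L/\lambda_i)^{2/(p-2)}$. Your two-term H\"older plus Young route also closes with constant at most $1$, so the fallback is unnecessary: Young gives $\lambda_i^{-1/(p-1)} \ex{e_i}^{p/(p-1)} \leq \frac{p}{2(p-1)} \ex{e_i^2}/L + \frac{p-2}{2(p-1)} L (L/\lambda_i)^{2/(p-2)}$, the two weights sum to one, and each of $\sum_i \ex{e_i^2}/L$ and $L \sum_i (L/\lambda_i)^{2/(p-2)}$ is at most $A_k$.
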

\begin{proof}
  We first bound the sum $\sum_{i=0}^{k-1} \ex*{\lambda_i \norm*{s_i}^p + e_i^2/L}$.
  Rearranging \cref{eq:sgd_function_decrease} and applying \cref{lem:power_diff_bound} gives
  \begin{align}
    f(x_{i+1}) - f(x_i)
    + \frac{\lambda_i}{2} \norm*{s_i}^p
    - \frac{e_i^2}{2 L}
    &\leq
    L \norm*{s_i}^2
    - \frac{\lambda_i}{2} \norm*{s_i}^p
    \leq
    L \prn*{\frac{2 L}{\lambda_i}}^{\frac{2}{p-2}}.
  \end{align}
  Summing this bound over $0 \leq i < k$ and rearranging terms yields
  \begin{align}
    \sum_{i=0}^{k-1} \prn*{ \lambda_i \norm*{s_i}^p + \frac{e_i^2}{L} }
    &\leq
    2 \Delta
    + 2 L \sum_{i=0}^{k-1} \prn*{\frac{2 L}{\lambda_i}}^{\frac{2}{p-2}}
    + \sum_{i=0}^{k-1} \frac{2 e_i^2}{L},
  \end{align}
  where we have used $f(x_0) - f(x_k) \leq \Delta$.
  Taking expectation and applying \cref{eq:sgd_error_variance} to the last term yields
  \begin{align}
    \sum_{i=0}^{k-1} \ex*{ \lambda_i \norm*{s_i}^p + \frac{e_i^2}{L} }
    &\leq
    2 \Delta
    + 2 L \sum_{i=0}^{k-1} \prn*{\frac{2 L}{\lambda_i}}^{\frac{2}{p-2}}
    + \frac{2 k \sigma^2}{L B}
    \eqqcolon
    A_k,
    \label{eq:sgd_sum_s_bound}
  \end{align}

  Next, taking expectation in \cref{eq:sgd_gradient_upperbound} and applying Jensen's inequality, we have
  \begin{align}
    \ex*{\norm*{\nabla f(x_i)}}
    &\leq
    \lambda_i \ex*{\norm*{s_i}^{p-1}} + \ex*{e_i}
    \leq
    \lambda_i \ex*{\norm*{s_i}^p}^{\frac{p-1}{p}} + \sqrt{\ex*{e_i^2}}.
  \end{align}
  Summing the above bound over $0 \leq i < k$ gives
  \begin{align}
    k
    \ex*{\norm*{\nabla f(\tilde x_k)}}
    =
    \sum_{i=0}^{k-1} \ex*{\norm*{\nabla f(x_i)}}
    &\leq
    \sum_{i=0}^{k-1} \lambda_i \ex*{\norm*{s_i}^p}^{\frac{p-1}{p}}
    + \sum_{i=0}^{k-1} \sqrt{\ex*{e_i^2}}.
  \end{align}
  As in \cref{lem:newton_rate_general,lem:gn_rate_general}, we bound the two sums using H\"older's inequality and \cref{eq:sgd_sum_s_bound}:
  \begin{align}
    &\sum_{i=0}^{k-1} \lambda_i \ex*{\norm*{s_i}^p}^{\frac{p-1}{p}}
    \leq
    \prn*{\sum_{i=0}^{k-1} \lambda_i}^{\frac{1}{p}}
    \prn*{\sum_{i=0}^{k-1} \lambda_i \ex*{\norm*{s_i}^p} }^{\frac{p-1}{p}}
    \leq
    \prn*{\sum_{i=0}^{k-1} \lambda_i}^{\frac{1}{p}}
    A_k^{1 - \frac{1}{p}},\\
    &\sum_{i=0}^{k-1} \sqrt{\ex*{e_i^2}}
    \leq
    \prn*{\sum_{i=0}^{k-1} \lambda_i}^{\frac{1}{p}}
    \prn*{\sum_{i=0}^{k-1} \frac{\ex*{e_i^2}}{L}}^{\frac{1}{2}}
    \prn*{L \sum_{i=0}^{k-1} \prn*{\frac{L}{\lambda_i}}^{\frac{2}{p-2}}}^{\frac{p-2}{2p}}
    \leq
    \prn*{\sum_{i=0}^{k-1} \lambda_i}^{\frac{1}{p}}
    A_k^{1 - \frac{1}{p}}.
  \end{align}
  Plugging these bounds into the previous inequality and dividing by $k$ completes the proof.
\end{proof}

Now we specify $(\lambda_k)_{k \in \N}$ and $B$ to obtain explicit convergence rates.
\begin{theorem}
  \label{thm:sgd_rate}
  Suppose that \cref{asm:sgd} holds.
  Let $p > 2$ and $c_\lambda, c_B > 0$ be arbitrary constants.
  Fix an integer $K \geq 1$ and set $\lambda_k = c_\lambda K^{\frac{p-2}{2}}$ for all $0 \leq k < K$ and $B = \ceil*{c_B K}$.
  Then, the following holds:
  \begin{align}
    \ex*{\norm*{\nabla f(\tilde x_K)}}
    &\leq
    \frac{2 c_\lambda^{\frac{1}{p}}}{\sqrt{K}}
    \prn*{
      2 \Delta
      + 2 L \prn*{\frac{2 L}{c_\lambda}}^{\frac{2}{p-2}}
      + \frac{2 \sigma^2}{L c_B}
    }^{\frac{p-1}{p}}
    =
    \O \prn*{ K^{-1/2} }.
    \label{eq:sgd_rate}
  \end{align}
  Furthermore, the oracle complexity to achieve $\ex*{\norm*{\nabla f(\tilde x_K)}} \leq \epsilon$ is $\O \prn[\big]{\epsilon^{-4}}$.
\end{theorem}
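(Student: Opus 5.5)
The plan is to substitute the prescribed parameters into \cref{lem:sgd_rate_general} with $k = K$, evaluate the three sums, and then count oracle calls.

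With $\lambda_i = c_\lambda K^{\frac{p-2}{2}}$ for all $0 \le i < K$, the two sums are
\begin{align}
  \sum_{i=0}^{K-1} \lambda_i = c_\lambda K^{\frac{p}{2}},
  \qquad
  \sum_{i=0}^{K-1} \prn*{\frac{2L}{\lambda_i}}^{\frac{2}{p-2}} = K \cdot \prn*{\frac{2L}{c_\lambda}}^{\frac{2}{p-2}} K^{-1} = \prn*{\frac{2L}{c_\lambda}}^{\frac{2}{p-2}} .
\end{align}
For the variance term, $B = \ceil*{c_B K} \ge c_B K$ gives $\frac{2K\sigma^2}{LB} \le \frac{2\sigma^2}{L c_B}$. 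If $c_B K > n$, the full batch is used. Then $e_k = 0$ and \cref{eq:sgd_error_variance} still holds trivially, so the same bound remains valid.

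Plugging these into \cref{eq:sgd_rate_general}, the prefactor becomes $\frac{2}{K}(c_\lambda K^{p/2})^{1/p} = \frac{2 c_\lambda^{1/p}}{\sqrt K}$. The bracket is bounded by the constant appearing in \cref{eq:sgd_rate}, which proves the displayed rate.

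For the oracle complexity, write $C$ for the $K$-independent quantity $2c_\lambda^{1/p}(\cdots)^{\frac{p-1}{p}}$. Choosing $K = \ceil*{C^2/\epsilon^2}$ guarantees $\ex*{\norm*{\nabla f(\tilde x_K)}} \le \epsilon$. Each iteration uses $B$ stochastic gradient evaluations, so the total is $KB \le K(c_B K + 1) = \O(K^2) = \O(\epsilon^{-4})$. If the full batch is used, the per-iteration cost is $n \le c_B K + 1$, so the count is no larger.

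The only mildly delicate point is this full-batch convention, and it is handled above. The rest is routine substitution.
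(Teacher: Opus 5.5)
Your proposal is correct and follows the same route as the paper. You substitute the constant $\lambda_k$ and $B \geq c_B K$ into \cref{lem:sgd_rate_general} with $k = K$ (a computation the paper omits as analogous to \cref{thm:gd_rate}), then count $KB = \O(K(1 + c_B K)) = \O(\epsilon^{-4})$ with $K = \O(\epsilon^{-2})$, and your remark that the full-batch convention keeps \cref{eq:sgd_error_variance} valid (since then $e_k = 0$) is a small detail the paper leaves implicit.
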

\begin{proof}
  We omit the proof of \cref{eq:sgd_rate}, as it is almost the same as the proof of \cref{thm:gd_rate}.
  The convergence rate in \cref{eq:sgd_rate} implies that $K = \O \prn*{\epsilon^{-2}}$ suffices to ensure $\ex*{\norm*{\nabla f(\tilde x_K)}} \leq \epsilon$.
  Hence, the total number of evaluations of $\nabla f_i$ is
  \begin{align}
    K B
    =
    \O \prn*{K \prn*{1 + c_B K}}
    =
    \O \prn*{ \epsilon^{-4} },
  \end{align}
  which completes the proof.
\end{proof}

If $L$, $\Delta$, and $\sigma$ are known, setting $c_\lambda = \Theta \prn[\big]{\Delta^{\frac{2-p}{2}} L^{\frac{p}{2}}}$ and $c_B = \Theta \prn[\big]{ \frac{\sigma^2}{\Delta L} }$ in \cref{eq:sgd_rate} yields
$
  \ex*{\norm*{\nabla f(\tilde x_K)}}
  \leq
  \O \prn[\big]{
    \sqrt{\Delta L / K}
  }
$.
Then, the resulting iteration complexity is $K = \O \prn*{\Delta L \epsilon^{-2}}$, and the oracle complexity is
\begin{align}
  K B
  =
  \O \prn*{K \prn*{1 + c_B K}}
  =
  \O \prn*{
    \frac{\Delta L}{\epsilon^2}
    \prn*{
      1 + \frac{\sigma^2}{\epsilon^2}
    }
  },
\end{align}
which matches the optimal dependence on $\Delta$, $L$, $\sigma$, and $\epsilon$~\citep{carmon2020lower,arjevani2023lower}.

\paragraph{Related work.}
Under \cref{asm:sgd}, \citet{ghadimi2013stochastic} established the standard SGD complexity $\O(\Delta L \epsilon^{-2} (1 + \sigma^2 \epsilon^{-2}))$, which is known to be optimal~\citep{carmon2020lower,arjevani2023lower}.
Untuned SGD achieves a $\tilde \O(\epsilon^{-4})$ bound, but its dependence on $L$ can be exponential~\citep{yang2023two}.
Parameter-free methods that avoid this exponential dependence include AdaGrad-Norm and normalized SGD.
For AdaGrad-Norm, an $\O(\epsilon^{-4})$ bound was proved under an additional bounded-gradient assumption~\citep{ward2020adagrad}, and this assumption was later removed at the cost of logarithmic factors~\citep{faw2022power}.
For normalized SGD, a $\tilde \O(\epsilon^{-4})$ bound was obtained with momentum~\citep{yang2023two}, while a log-free $\O(\epsilon^{-4})$ bound was established with mini-batching~\citep{hubler2025gradient}.
The bound in~\citep{hubler2025gradient} also recovers the optimal dependence $\O(\Delta L \epsilon^{-2} (1 + \sigma^2 \epsilon^{-2}))$ when the parameters are tuned.
Our instantiation is closest to~\citep{hubler2025gradient}.
Because our analysis is based on a model-based framework, it also extends naturally to variance-reduced methods.

\section{The PAGE algorithm}
\label{sec:page}
This section applies the higher-order regularization recipe to PAGE, a variance-reduced stochastic gradient method proposed by \citet{li2021page}.
We continue to consider the finite-sum problem \cref{eq:sgd_problem}.

\subsection{Algorithm}
We use the same update rule as in \cref{sec:sgd}; that is, the step $s_k \coloneqq x_{k+1} - x_k$ is computed by solving \cref{eq:sgd_subproblem}.
The difference is that $g_k$ is the PAGE estimator~\citep{li2021page}, defined by \cref{eq:sgd_grad_estimator} for $k=0$ and by
\begin{align}
  g_k
  =
  \begin{dcases}
    \displaystyle
    \frac{1}{B} \sum_{i \in I_k} \nabla f_i(x_k),
    & \text{with probability } \theta,\\
    \displaystyle
    g_{k-1}
    +
    \frac{1}{b} \sum_{i \in J_k}
    \prn*{\nabla f_i(x_k) - \nabla f_i(x_{k-1})},
    & \text{with probability } 1 - \theta
  \end{dcases}
  \label{eq:page_estimator}
\end{align}
for $k \geq 1$.
Here, $\theta \in (0, 1]$, and $I_k$ and $J_k$ are uniformly sampled subsets of $\set{1,\dots,n}$ of sizes $B$ and $b$, respectively.

The PAGE analysis uses the following standard assumptions.
\begin{assumption}
  \label{asm:page}
  Let $L, \sigma > 0$ be constants.
  \begin{enuminasm}
    \item
    \label{asm:page_average_smoothness}
    $\ex[\big]<i>{\norm*{\nabla f_i(x) - \nabla f_i(y)}^2} \leq L^2 \norm*{x - y}^2$ for all $x, y \in \R^d$.
    \item
    \label{asm:page_bounded_variance}
    $\ex[\big]<i>{\norm*{\nabla f_i(x) - \nabla f(x)}^2} \leq \sigma^2$ for all $x \in \R^d$.
  \end{enuminasm}
\end{assumption}
\Cref{asm:page_average_smoothness} is often referred to as average smoothness; it is stronger than the smoothness condition on $f$.
Indeed, by Jensen's inequality, \cref{asm:page_average_smoothness} implies \cref{asm:sgd_lip_grad}.

As in the SGD analysis, we define the estimation error $e_k$ by \cref{eq:sgd_def_ek}.
This error satisfies the following recursion.
\begin{lemma}[{\citep[Lemmas~3 and 4]{li2021page}}]
  \label{lem:page_error_recursion}
  Suppose that \cref{asm:page_average_smoothness} holds.
  \begin{itemize}
    \item
    If $B = n$, then the following holds for all $i \in \N$:
    \begin{align}
      \ex*{e_{i+1}^2 - (1 - \theta) e_i^2}
      \leq
      L^2 \frac{1 - \theta}{b} \ex[\big]{ \norm*{s_i}^2 }.
      \label{eq:page_error_recursion_exact}
    \end{align}
    \item
    If \cref{asm:page_bounded_variance} holds, then the following holds for all $i \in \N$:
    \begin{align}
      \ex*{e_{i+1}^2 - (1 - \theta) e_i^2}
      \leq
      L^2 \frac{1 - \theta}{b} \ex[\big]{ \norm*{s_i}^2 }
      + \frac{\sigma^2 \theta}{B}.
      \label{eq:page_error_recursion}
    \end{align}
  \end{itemize}
\end{lemma}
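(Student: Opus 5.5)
The plan is to condition on the history $\mathcal F_{i+1}$, meaning all randomness up to the choice of $x_{i+1}$. Note that $x_{i+1} = x_i + s_i$ is determined by $g_i$ and $x_i$. We then split according to the coin flip in \cref{eq:page_estimator}.

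\emph{Full-batch branch (probability $\theta$).} Here $g_{i+1}$ is a fresh mini-batch estimator at $x_{i+1}$. Exactly as in \cref{eq:sgd_error_variance}, \cref{asm:page_bounded_variance} gives a conditional second moment of the error at most $\sigma^2/B$. If $B=n$, the batch is the full gradient and the error is zero.

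\emph{Recursive branch (probability $1-\theta$).} Write
\begin{align}
  g_{i+1} - \nabla f(x_{i+1})
  = \prn*{g_i - \nabla f(x_i)} + \prn*{D_i - \prn*{\nabla f(x_{i+1}) - \nabla f(x_i)}},
\end{align}
where $D_i \coloneqq \frac{1}{b}\sum_{j\in J_{i+1}}\prn*{\nabla f_j(x_{i+1}) - \nabla f_j(x_i)}$. Conditionally on $\mathcal F_{i+1}$, the first term is fixed and the second has zero mean, since $J_{i+1}$ is drawn uniformly and independently. The cross term therefore vanishes, and the conditional second moment equals $e_i^2 + \ex{\norm{D_i - \ex{D_i}}^2}$.

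To bound the variance of $D_i$, use the standard fact that the mini-batch average of $b$ i.i.d.\ (or without-replacement) samples has variance at most $\frac1b$ times the single-sample variance. The single-sample variance is at most the second moment $\ex<j>{\norm{\nabla f_j(x_{i+1})-\nabla f_j(x_i)}^2}$, which \cref{asm:page_average_smoothness} bounds by $L^2\norm{s_i}^2$. This gives a contribution of at most $\frac{L^2}{b}\norm{s_i}^2$.

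\emph{Combining.} Weighting the two branches by their probabilities, we get
\begin{align}
  \ex{e_{i+1}^2 \mid \mathcal F_{i+1}} \le (1-\theta)\prn*{e_i^2 + \frac{L^2}{b}\norm{s_i}^2} + \theta\frac{\sigma^2}{B}.
\end{align}
In the case $B=n$ the last term is replaced by $0$. Taking total expectation and rearranging yields \cref{eq:page_error_recursion} and \cref{eq:page_error_recursion_exact}.

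The main point requiring care is the measurability and independence bookkeeping. We need the coin and the samples $I_{i+1}, J_{i+1}$ to be independent of $\mathcal F_{i+1}$, so that the cross term vanishes while $s_i$ and $e_i$ are treated as constants. We also need the variance bound to remain valid for sampling without replacement; there the variance factor $\frac{n-b}{b(n-1)}\le \frac1b$ suffices.
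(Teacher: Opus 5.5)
Your proposal is correct and follows essentially the standard PAGE argument of Li et al.\ that the paper invokes by citation. The paper adds only that this argument depends solely on the update rule for $g_k$, which your conditioning on $\mathcal F_{i+1}$ (treating $s_i$ as fixed) makes explicit; your finite-population remark $\frac{n-b}{b(n-1)} \le \frac{1}{b}$ is worth keeping, since the paper draws $I_k$ and $J_k$ as uniform subsets without replacement.
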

The proof in \citep{li2021page} relies only on the update rule for $g_k$ in \cref{eq:page_estimator}, and is independent of the update rule for $x_k$.
Hence, the lemma above also applies to our setting.

PAGE is known to achieve two oracle complexity bounds:
$\O(n + \sqrt{n}\epsilon^{-2})$ in the exact-refresh case where $B = n$, and
$\O(\epsilon^{-3})$ in the bounded-variance setting (\cref{asm:page_bounded_variance}).
In the following analysis, we derive parameter-free counterparts of both bounds.

For notational simplicity, we state both results for the finite-sum problem~\cref{eq:sgd_problem}.
As in the SGD section, the bounded-variance analysis extends directly to the general expectation setting $f(x) = \ex<\xi \sim P>{f(x;\xi)}$.
The exact-refresh case, however, requires the finite-sum structure, because it sets $B = n$ and uses the full gradient at refresh iterations.

\subsection{Complexity analysis}
Since \cref{asm:page_average_smoothness} implies \cref{asm:sgd_lip_grad}, and PAGE differs from SGD only in the construction of $g_k$, \cref{lem:sgd_bounds_f_grad} applies to PAGE as well.
This completes the first step of the recipe in \cref{sec:recipe_general}.

As in the SGD analysis, for $k \geq 1$, let $\tilde x_k$ be chosen uniformly at random from $x_0,\dots,x_{k-1}$, independently of all other randomness.
The following lemma provides a general bound on $\ex*{\norm*{\nabla f(\tilde x_k)}}$ in the bounded-variance setting.
The proof is inspired by \citep[Section~3]{li2021short}.

\begin{lemma}
  \label{lem:page_rate_general}
  Suppose that \cref{asm:page} holds and let $p > 2$.
  Set
  \begin{align}
    \theta
    =
    \frac{1}{1 + b}.
    \label{eq:page_theta_b_relation}
  \end{align}
  Then, for all $k \geq 1$, we have
  \begin{align}
    \ex*{\norm*{\nabla f(\tilde x_k)}}
    \leq
    \frac{2}{k}
    \prn*{
      \sum_{i=0}^{k-1} \lambda_i
    }^{\frac{1}{p}}
    \prn*{
      2 \Delta
      + 4 L \sum_{i=0}^{k-1} \prn*{\frac{4 L}{\lambda_i}}^{\frac{2}{p-2}}
      + \frac{2 \sigma^2 (k + b)}{L B}
    }^{\frac{p-1}{p}}.
    \label{eq:page_rate_general}
  \end{align}
\end{lemma}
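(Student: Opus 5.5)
The plan is to follow the SGD argument (\cref{lem:sgd_rate_general}) almost verbatim. The one new ingredient is in Step~2: a Lyapunov potential that lets the recursion of \cref{lem:page_error_recursion} absorb the variance-reduced errors $e_i$.

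\textbf{The potential and the choice of $\theta$.} The choice $\theta = \frac{1}{1+b}$ gives $\frac{1-\theta}{b} = \theta$. Dividing \cref{eq:page_error_recursion} by $L\theta$ then gives
\begin{align}
  \frac{1}{L\theta}\ex*{e_{i+1}^2}
  \leq
  \prn*{\frac{1}{L\theta} - \frac{1}{L}} \ex*{e_i^2}
  + L \ex*{\norm*{s_i}^2}
  + \frac{\sigma^2}{L B}.
\end{align}
I will use the potential $\Phi_i \coloneqq f(x_i) + \frac{e_i^2}{L\theta}$. Adding the display above to \cref{eq:sgd_function_decrease}, which applies to PAGE as noted, gives
\begin{align}
  \ex*{\Phi_{i+1} - \Phi_i}
  \leq
  \ex*{2L\norm*{s_i}^2 - \lambda_i \norm*{s_i}^p - \frac{e_i^2}{2L}} + \frac{\sigma^2}{LB}.
\end{align}
Moving $\frac{\lambda_i}{2}\norm*{s_i}^p + \frac{e_i^2}{2L}$ to the left leaves $2L\norm*{s_i}^2 - \frac{\lambda_i}{2}\norm*{s_i}^p$ on the right. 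Applying \cref{lem:power_diff_bound} with $b = 2L$, $a = \lambda_i/2$, $q=2$ bounds this by $2L\prn*{\frac{4L}{\lambda_i}}^{\frac{2}{p-2}}$, which is exactly the term in \cref{eq:page_rate_general}.

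\textbf{Telescoping.} To get the factor $k+b$ rather than $k+1+b$, I will use the potential step only for $0 \le i \le k-2$. For the last index $i = k-1$ I will use \cref{eq:sgd_function_decrease} alone, which involves only $f(x_k)$ and not $e_k$. Summing, multiplying by $2$, and using $f(x_0) - f(x_k) \le \Delta$ should give
\begin{align}
  \sum_{i=0}^{k-1} \ex*{\lambda_i\norm*{s_i}^p + \frac{e_i^2}{L}}
  \le
  2\Delta + \frac{2\ex{e_0^2}}{L\theta} + 4L\sum_{i=0}^{k-1}\prn*{\frac{4L}{\lambda_i}}^{\frac{2}{p-2}} + \frac{2(k-1)\sigma^2}{LB}.
\end{align}
The initial error satisfies $\ex*{e_0^2} \le \sigma^2/B$ by \cref{eq:sgd_error_variance}, and $1/\theta = 1+b$. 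The right-hand side is therefore at most the quantity inside the parentheses of \cref{eq:page_rate_general}, which I call $A_k$.

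\textbf{Step 3.} This copies the SGD proof. Take expectations in \cref{eq:sgd_gradient_upperbound} and apply Jensen's inequality. Then apply the two H\"older bounds, using $\sum_{i} \ex*{\lambda_i \norm*{s_i}^p} \le A_k$, $\sum_{i} \ex*{e_i^2}/L \le A_k$, and $L\sum_{i}\prn*{L/\lambda_i}^{\frac{2}{p-2}} \le A_k$. Each of the two sums is then at most $\prn*{\sum_{i=0}^{k-1}\lambda_i}^{1/p} A_k^{1-1/p}$. Dividing by $k$ yields the factor $\frac{2}{k}$ in \cref{eq:page_rate_general}.

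The main obstacle is the bookkeeping in the potential step. The coefficient of $e_i^2$ must come out as exactly $-\frac{1}{2L}$, and the boundary terms must match the stated $k+b$. The choice $\theta = 1/(1+b)$ is what makes the $L\norm*{s_i}^2$ term from the recursion merge cleanly with the $L\norm*{s_i}^2$ term from \cref{eq:sgd_function_decrease}. The expectations also need care: \cref{lem:page_error_recursion} holds only in expectation, so all inequalities should be taken in expectation before telescoping.
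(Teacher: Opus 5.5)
Your proposal is correct and is essentially the paper's argument. The paper does the same bookkeeping in two separate sums: it telescopes the recursion multiplied by $1+b$ over $0 \le i \le k-2$, then adds $2/L$ times the resulting bound on $\sum_{i=0}^{k-1} \ex*{e_i^2}$ to twice the summed function-decrease inequality over $0 \le i \le k-1$. That is exactly the linear combination your potential $\Phi_i = f(x_i) + e_i^2/(L\theta)$ packages, since $1/(L\theta) = (1+b)/L$, with the same handling of the last index giving the $k+b$ factor and the same H\"older step afterward.
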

\begin{proof}
  We first bound the sum $\sum_{i=0}^{k-1} \ex*{e_i^2}$.
  Since $(1 + b)(1 - \theta) = b$ and $(1 + b) \theta = 1$ by \cref{eq:page_theta_b_relation}, multiplying \cref{eq:page_error_recursion} by $(1 + b)$ gives
  \begin{align}
    \ex*{ (1 + b) e_{i+1}^2 - b e_i^2 }
    &\leq
    L^2 \ex[\big]{ \norm*{s_i}^2 }
    + \frac{\sigma^2}{B}.
  \end{align}
  Summing this bound over $0 \leq i \leq k-2$ yields
  \begin{align}
    (1 + b) \ex*{ e_{k-1}^2}
    + \sum_{i=1}^{k-2} \ex*{ e_i^2}
    - b \ex*{e_0^2}
    &\leq
    L^2 \sum_{i=0}^{k-2} \ex[\big]{ \norm*{s_i}^2 }
    + \frac{\sigma^2 (k-1)}{B}.
  \end{align}
  Rearranging terms gives
  \begin{align}
    \sum_{i=0}^{k-1} \ex*{ e_i^2}
    &\leq
    L^2 \sum_{i=0}^{k-2} \ex[\big]{ \norm*{s_i}^2 }
    + \frac{\sigma^2 (k-1)}{B}
    + (1+b) \ex*{e_0^2} - b \ex*{e_{k-1}^2}\\
    &\leq
    L^2 \sum_{i=0}^{k-1} \ex[\big]{ \norm*{s_i}^2 }
    + \frac{\sigma^2 (k + b)}{B},
    \label{eq:page_error_sum_bound}
  \end{align}
  where the last inequality uses $\ex*{e_0^2} \leq \sigma^2 / B$.

  Next, rearranging \cref{eq:sgd_function_decrease} and applying \cref{lem:power_diff_bound} gives
  \begin{align}
    f(x_{i+1}) - f(x_i)
    + \frac{\lambda_i}{2} \norm*{s_i}^p
    + L \norm*{s_i}^2
    - \frac{e_i^2}{2 L}
    &\leq
    2 L \norm*{s_i}^2
    - \frac{\lambda_i}{2} \norm*{s_i}^p
    \leq
    2 L \prn*{\frac{4 L}{\lambda_i}}^{\frac{2}{p-2}}.
  \end{align}
  Summing this bound over $0 \leq i < k$ and rearranging terms gives
  \begin{align}
    \sum_{i=0}^{k-1} \prn*{ \lambda_i \norm*{s_i}^p + 2 L \norm*{s_i}^2 - \frac{e_i^2}{L} }
    &\leq
    2 \Delta
    + 4 L \sum_{i=0}^{k-1} \prn*{\frac{4 L}{\lambda_i}}^{\frac{2}{p-2}}.
  \end{align}
  Taking expectation and adding \cref{eq:page_error_sum_bound} multiplied by $2/L$ gives
  \begin{align}
    \sum_{i=0}^{k-1} \ex*{ \lambda_i \norm*{s_i}^p + \frac{e_i^2}{L} }
    &\leq
    2 \Delta
    + 4 L \sum_{i=0}^{k-1} \prn*{\frac{4 L}{\lambda_i}}^{\frac{2}{p-2}}
    + \frac{2 \sigma^2 (k + b)}{L B}
    \eqqcolon
    A_k.
  \end{align}

  The remainder follows exactly as in the second paragraph of the proof of \cref{lem:sgd_rate_general}, with $A_k$ defined as above, and is omitted.
\end{proof}

The exact-refresh case follows from the same argument.
Indeed, in the proof of \cref{lem:page_rate_general}, if we use \cref{eq:page_error_recursion_exact} instead of \cref{eq:page_error_recursion}, then the term involving $\sigma$ disappears.
Consequently, under the exact-refresh setting $B = n$, we obtain
\begin{align}
  \ex*{\norm*{\nabla f(\tilde x_k)}}
  \leq
  \frac{2}{k}
  \prn*{
    \sum_{i=0}^{k-1} \lambda_i
  }^{\frac{1}{p}}
  \prn*{
    2 \Delta
    + 4 L \sum_{i=0}^{k-1} \prn*{\frac{4 L}{\lambda_i}}^{\frac{2}{p-2}}
  }^{\frac{p-1}{p}}.
  \label{eq:page_rate_general_finite-sum}
\end{align}
The proof is identical to that of \cref{lem:page_rate_general}, with \cref{eq:page_error_recursion_exact} in place of \cref{eq:page_error_recursion}, and is omitted.

The following theorem specifies the algorithmic parameters and gives the resulting complexity bound in the exact-refresh setting.
\begin{theorem}
  Suppose that \cref{asm:page_average_smoothness} holds.
  Let $p > 2$ and $c_\lambda > 0$ be arbitrary constants.
  Fix an integer $K \geq 1$ and set
  \begin{align}
    b = \ceil*{\sqrt{n}},\quad
    B = n,\quad
    \theta = \frac{1}{1 + b},
    \label{eq:page_params_finite-sum}
  \end{align}
  and $\lambda_k = c_\lambda K^{\frac{p-2}{2}}$ for all $0 \leq k < K$.
  Then, the following holds:
  \begin{align}
    \ex*{\norm*{\nabla f(\tilde x_K)}}
    &\leq
    \frac{2 c_\lambda^{\frac{1}{p}}}{\sqrt{K}}
    \prn*{
      2 \Delta
      + 4 L \prn*{\frac{4 L}{c_\lambda}}^{\frac{2}{p-2}}
    }^{\frac{p-1}{p}}
    =
    \O \prn*{ K^{-1/2} }.
    \label{eq:page_rate_finite-sum}
  \end{align}
  Furthermore, the oracle complexity to achieve $\ex*{\norm*{\nabla f(\tilde x_K)}} \leq \epsilon$ is $\O \prn[\big]{n + \sqrt{n} \epsilon^{-2}}$.
\end{theorem}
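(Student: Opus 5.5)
The proof has two parts. First we substitute the parameter choice into the exact-refresh bound \cref{eq:page_rate_general_finite-sum}. Then we count oracle calls.

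For the rate, we check that \cref{eq:page_rate_general_finite-sum} applies. The choice $\theta = 1/(1+b)$ in \cref{eq:page_params_finite-sum} is exactly \cref{eq:page_theta_b_relation}, and $B = n$ is the exact-refresh setting. With $\lambda_k = c_\lambda K^{\frac{p-2}{2}}$, taking $k = K$ gives two evaluations:
\begin{align}
  \sum_{i=0}^{K-1} \lambda_i = c_\lambda K^{p/2},
  \qquad
  \sum_{i=0}^{K-1} \prn*{\frac{4L}{\lambda_i}}^{\frac{2}{p-2}}
  = K \prn*{\frac{4L}{c_\lambda}}^{\frac{2}{p-2}} K^{-1}
  = \prn*{\frac{4L}{c_\lambda}}^{\frac{2}{p-2}}.
\end{align}
The prefactor then becomes $\frac{2}{K} c_\lambda^{1/p} K^{1/2} = \frac{2 c_\lambda^{1/p}}{\sqrt K}$, which yields \cref{eq:page_rate_finite-sum} directly, as in the proof of \cref{thm:newton_rate}. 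No $\sigma$ term appears, so \cref{asm:page_bounded_variance} is not needed.

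For the complexity, \cref{eq:page_rate_finite-sum} shows that $K = \O(\epsilon^{-2})$ iterations suffice for $\ex*{\norm*{\nabla f(\tilde x_K)}} \leq \epsilon$. We then bound the expected number of component-gradient evaluations.
\begin{itemize}
  \item Iteration $0$ costs $B = n$.
  \item Each later iteration costs $B = n$ with probability $\theta$ and $2b$ with probability $1-\theta$.
  \item The expected per-iteration cost is therefore at most $\theta n + 2b \leq \frac{n}{1+\sqrt n} + 2(\sqrt n + 1) = \O(\sqrt n)$.
\end{itemize}
The total expected cost is $\O(n + K\sqrt n) = \O(n + \sqrt n\,\epsilon^{-2})$. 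Unlike the SGD case, this count is in expectation, and the statement should be read that way, as is standard for PAGE.

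The only delicate point is confirming that \cref{eq:page_rate_general_finite-sum} holds under \cref{asm:page_average_smoothness} alone. This needs two facts: $e_0 = 0$ when $B = n$, and \cref{eq:page_error_recursion_exact} is used in place of \cref{eq:page_error_recursion}. The paper already asserts both before the statement, so the rest is bookkeeping.
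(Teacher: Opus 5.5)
Your proposal is correct and matches the paper's proof: you substitute the constant schedule into \cref{eq:page_rate_general_finite-sum} (the paper cites \cref{thm:gd_rate} rather than \cref{thm:newton_rate}, but the computation is the same) and bound the expected cost by $B + (K-1)(\theta B + 2(1-\theta) b) = n + \O(K\sqrt{n})$. Your remark that $e_0 = 0$ when $B = n$ is exactly what removes the $\sigma$-term in the derivation the paper omits.
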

\begin{proof}
  The two sums in \cref{eq:page_rate_general_finite-sum} can be bounded in the same way as in the proof of \cref{thm:gd_rate}, yielding \cref{eq:page_rate_finite-sum}.
  It suffices to take $K = \O \prn*{\epsilon^{-2}}$ to ensure $\ex*{\norm*{\nabla f(\tilde x_K)}} \leq \epsilon$.
  Hence, the expected number of evaluations of $\nabla f_i$ is
  \begin{align}
    B + (K-1) \prn[\big]{
      \theta B + 2 (1 - \theta) b
    }
    &=
    n + \O \prn*{K \sqrt{n}}
    =
    \O \prn*{ n + \sqrt{n} \epsilon^{-2} },
  \end{align}
  which completes the proof.
\end{proof}

If $L$ and $\Delta$ are known, setting $c_\lambda = \Theta \prn[\big]{\Delta^{\frac{2-p}{2}} L^{\frac{p}{2}}}$ in \cref{eq:page_rate_finite-sum} yields
$
  \ex*{\norm*{\nabla f(\tilde x_K)}}
  \leq
  \O \prn[\big]{
    \sqrt{\Delta L / K}
  }
$.
Thus, the resulting iteration complexity is $K = \O \prn*{\Delta L \epsilon^{-2}}$, and the oracle complexity is $\O(n + \sqrt{n} \Delta L \epsilon^{-2})$.
This matches the lower bound~\citep{li2021page}.

We next turn to the bounded-variance setting.
Compared with the exact-refresh case, the parameter choice is more delicate because the additional variance term in \cref{eq:page_rate_general} must also be controlled.
The following theorem gives a parameter-free choice that leads to the oracle complexity $\O(\epsilon^{-3})$.
\begin{theorem}
  Suppose that \cref{asm:page} holds.
  Let $p > 2$ and $c_\lambda, c_b > 0$ be arbitrary constants.
  Fix an integer $K \geq 1$ and let $m \coloneqq \min \set[\big]{ \sqrt{K},\, K / c_b}$.
  Set
  \begin{align}
    b = \ceil*{c_b m},\quad
    B = b^2,\quad
    \theta = \frac{1}{1 + b},
  \end{align}
  and $\lambda_k = c_\lambda m^{p-2}$ for all $0 \leq k < K$.
  Then, the following holds:
  \begin{align}
    \ex*{\norm*{\nabla f(\tilde x_K)}}
    \leq
    \frac{2 c_\lambda^{\frac{1}{p}}}{m}
    \prn*{
      2 \Delta
      + 4 L \prn*{\frac{4 L}{c_\lambda}}^{\frac{2}{p-2}}
      + \frac{4 \sigma^2}{L c_b^2}
    }^{\frac{p-1}{p}}
    =
    \O \prn*{K^{-1/2}}.
    \label{eq:page_rate_bounded-variance}
  \end{align}
  Furthermore, the oracle complexity to achieve $\ex*{\norm*{\nabla f(\tilde x_K)}} \leq \epsilon$ is $\O \prn[\big]{\epsilon^{-3}}$.
\end{theorem}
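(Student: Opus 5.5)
We will plug the specified parameters into \cref{lem:page_rate_general} and then count oracle calls. The parameter choice satisfies $\theta = 1/(1+b)$, so \cref{eq:page_rate_general} applies with $k = K$.

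\textbf{Evaluating the sums.} With $\lambda_i = c_\lambda m^{p-2}$ we have
\begin{align}
  \sum_{i=0}^{K-1} \lambda_i = c_\lambda K m^{p-2},
  \qquad
  \sum_{i=0}^{K-1} \prn*{\frac{4L}{\lambda_i}}^{\frac{2}{p-2}} = K m^{-2} \prn*{\frac{4L}{c_\lambda}}^{\frac{2}{p-2}}.
\end{align}
For the variance term we use $B = b^2 \geq c_b^2 m^2$ and $b \leq c_b m + 1$. We want $\frac{K+b}{B} \leq \frac{2K}{c_b^2 m^2}$, which reduces to $b \leq K$. This holds because $m \leq K/c_b$ gives $c_b m \leq K$, and $b = \ceil{c_b m} \leq K$ since $K$ is an integer. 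Hence $\frac{2\sigma^2(K+b)}{LB} \leq \frac{4\sigma^2 K}{L c_b^2 m^2}$. Using $m \leq \sqrt{K}$, i.e.\ $K/m^2 \geq 1$, we can also bound $2\Delta \leq 2\Delta K/m^2$. The whole bracket in \cref{eq:page_rate_general} is therefore at most $\frac{K}{m^2} C$, where $C$ is the constant in \cref{eq:page_rate_bounded-variance}. The right-hand side then becomes
\begin{align}
  \frac{2}{K} \prn*{c_\lambda K m^{p-2}}^{\frac1p} \prn*{\frac{K}{m^2}}^{\frac{p-1}{p}} C^{\frac{p-1}{p}}.
\end{align}
The powers of $K$ are $\frac1p + \frac{p-1}{p} - 1 = 0$, and the powers of $m$ are $\frac{p-2}{p} - \frac{2(p-1)}{p} = -1$. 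This gives exactly \cref{eq:page_rate_bounded-variance}. For $K \geq c_b^2$ we have $m = \sqrt{K}$, so the rate is $\O(K^{-1/2})$.

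\textbf{Oracle complexity.} Reaching accuracy $\epsilon$ requires $K = \O(\epsilon^{-2})$, which we take large enough that $m = \sqrt{K}$. Then $b = \O(\sqrt{K})$ and $B = \O(K)$. The expected number of evaluations of $\nabla f_i$ is
\begin{align}
  B + (K-1)\prn*{\theta B + 2(1-\theta) b} \leq b^2 + K\prn*{\frac{b^2}{1+b} + 2b} = \O(K + K\sqrt{K}) = \O(K^{3/2}) = \O(\epsilon^{-3}).
\end{align}

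\textbf{Main obstacle.} The delicate step is controlling $(K+b)/B$ by $K/m^2$ uniformly. This uses both branches of the definition of $m$ (to get $b \leq K$ and $K \geq m^2$), together with the ceiling in $b$. The rest is bookkeeping.
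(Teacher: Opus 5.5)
Your proposal is correct and follows essentially the same route as the paper: you evaluate the two $\lambda$-sums, bound the variance term using $b \le K$ and $b \ge c_b m$, absorb $2\Delta$ into the common factor $K/m^2$ via $K/m^2 \ge 1$, and then count oracle calls as $\O(b^2 + Kb) = \O(K^{3/2}) = \O(\epsilon^{-3})$. Your restriction to $K \ge c_b^2$ (so that $m = \sqrt{K}$) is harmless but not needed. The paper's count $\O(K + c_b K^{3/2})$ holds for every $K \geq 1$ directly from $b \le c_b m + 1 \le c_b \sqrt{K} + 1$.
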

\begin{proof}
  The sums involving $\lambda_i$ in \cref{eq:page_rate_general} can be evaluated as follows:
  \begin{align}
    \sum_{i=0}^{K-1} \lambda_i
    =
    c_\lambda m^{p-2} K,\quad
    \sum_{i=0}^{K-1} \prn*{\frac{4 L}{\lambda_i}}^{\frac{2}{p-2}}
    =
    \frac{K}{m^2}
    \prn*{\frac{4 L}{c_\lambda}}^{\frac{2}{p-2}}.
  \end{align}
  The term involving $\sigma$ can be bounded as follows:
  \begin{align}
    \frac{2 \sigma^2 (K + b)}{L B}
    =
    \frac{2 \sigma^2 (K + b)}{L b^2}
    \leq
    \frac{4 \sigma^2 K}{L c_b^2 m^2},
  \end{align}
  where the last inequality follows from $b \leq K$ and $b \geq c_b m$.
  Plugging the above bounds into \cref{eq:page_rate_general} and using $\frac{K}{m^2} \geq 1$, we obtain the bound~\cref{eq:page_rate_bounded-variance} as follows:
  \begin{align}
    \ex*{\norm*{\nabla f(\tilde x_K)}}
    &\leq
    \frac{2}{K}
    \prn*{
      c_\lambda m^{p-2} K
    }^{\frac{1}{p}}
    \prn*{
      2 \Delta \frac{K}{m^2}
      + 4 L \frac{K}{m^2} \prn*{\frac{4 L}{c_\lambda}}^{\frac{2}{p-2}}
      + \frac{4 \sigma^2 K}{L c_b^2 m^2}
    }^{\frac{p-1}{p}}\\
    &=
    \frac{2}{K}
    \prn*{
      c_\lambda m^{p-2} K
    }^{\frac{1}{p}}
    \prn*{\frac{K}{m^2}}^{\frac{p-1}{p}}
    \prn*{
      2 \Delta
      + 4 L \prn*{\frac{4 L}{c_\lambda}}^{\frac{2}{p-2}}
      + \frac{4 \sigma^2}{L c_b^2}
    }^{\frac{p-1}{p}}\\
    &=
    \frac{2 c_\lambda^{\frac{1}{p}}}{m}
    \prn*{
      2 \Delta
      + 4 L \prn*{\frac{4 L}{c_\lambda}}^{\frac{2}{p-2}}
      + \frac{4 \sigma^2}{L c_b^2}
    }^{\frac{p-1}{p}}
    =
    \O \prn*{K^{-1/2}}.
  \end{align}
  This rate implies that $K = \O(\epsilon^{-2})$ suffices to ensure $\ex*{\norm*{\nabla f(\tilde x_K)}} \leq \epsilon$.
  Hence, the oracle complexity is
  \begin{align}
    B + (K-1) \prn[\big]{
      \theta B + 2 (1 - \theta) b
    }
    =
    \O \prn*{b^2 + K b}
    =
    \O \prn*{
      K + c_b K^{3/2}
    }
    =
    \O \prn*{ \epsilon^{-3} },
  \end{align}
  which completes the proof.
\end{proof}

If $L$, $\Delta$, and $\sigma$ are known, setting $c_\lambda = \Theta \prn[\big]{\Delta^{\frac{2-p}{2}} L^{\frac{p}{2}}}$ and $c_b = \Theta \prn[\big]{\sigma / \sqrt{\Delta L}}$ in \cref{eq:page_rate_bounded-variance} gives
\begin{align}
  \ex*{\norm*{\nabla f(\tilde x_K)}}
  &\leq
  \O \prn[\bigg]{
    \frac{\sqrt{\Delta L}}{m}
  }
  =
  \O \prn[\bigg]{
    \sqrt{\frac{\Delta L}{K}}
    +
    \frac{\sigma}{K}
  }.
\end{align}
Hence, $K = \O \prn{\Delta L \epsilon^{-2} + \sigma \epsilon^{-1}}$ suffices to ensure $\ex*{\norm*{\nabla f(\tilde x_K)}} \leq \epsilon$.

For this choice of $K$, we also have $b = \O \prn{\sigma \epsilon^{-1}}$.
To see this, first suppose that $\Delta L \leq \sigma \epsilon$.
Then the iteration complexity reduces to $K = \O(\sigma \epsilon^{-1})$, and hence $c_b m \leq K = \O(\sigma \epsilon^{-1})$.
On the other hand, if $\Delta L > \sigma \epsilon$, then $K = \O(\Delta L \epsilon^{-2})$.
Combining this bound with $c_b = \Theta(\sigma / \sqrt{\Delta L})$ gives $c_b m \leq c_b \sqrt K = \O(\sigma \epsilon^{-1})$.
Therefore, $b = \ceil*{c_b m} = \O \prn{\sigma \epsilon^{-1}}$.

Consequently, the oracle complexity is
\begin{align}
  B + (K-1) \prn[\big]{
    \theta B + 2 (1 - \theta) b
  }
  =
  \O \prn*{b^2 + Kb}
  =
  \O \prn[\bigg]{
    \frac{\Delta L \sigma}{\epsilon^3} + \frac{\sigma^2}{\epsilon^2}
  }.
\end{align}
Thus, the parameter-tuned version recovers the optimal oracle complexity~\citep{arjevani2023lower}.

\paragraph{Related work.}
Early variance-reduced methods~\citep{reddi2016stochastic,allen2016variance} for the nonconvex finite-sum problem~\cref{eq:sgd_problem} achieved the complexity $\O(n + n^{2/3} \Delta L \epsilon^{-2})$ under the individual smoothness assumption, namely, the $L$-smoothness of each $f_i$.
\citet{fang2018spider} improved this bound to $\O(n + \sqrt{n} \Delta L \epsilon^{-2})$ under average smoothness (i.e., \cref{asm:page_average_smoothness}), which relaxes individual smoothness.
In the bounded-variance setting, they also established the complexity $\O(\Delta L \epsilon^{-2} (1 + \sigma \epsilon^{-1}))$ under average smoothness.
These finite-sum and bounded-variance bounds are known to be optimal~\citep{li2021page,arjevani2023lower}.
Many other variance-reduced methods with near-optimal or optimal complexity bounds have since been developed, including \citep{wang2019spiderboost,pham2020proxsarah,li2021page,cutkosky2019momentum,jiang2024adaptive,levy2021storm+,kavis2022adaptive}.
Among them, PAGE~\citep{li2021page} is particularly simple and admits a concise analysis \citep{li2021short}.
Parameter-free variance-reduced methods have also been studied~\citep{jiang2024adaptive,levy2021storm+,kavis2022adaptive}.
The method of~\citep{jiang2024adaptive} achieves the complexities $\O(n + \sqrt{n} \epsilon^{-2})$ and $\O(\epsilon^{-3})$, although its finite-sum bound relies on individual smoothness.
The method of~\citep{kavis2022adaptive} obtains the finite-sum bound $\tilde \O(n + \sqrt{n} \epsilon^{-2})$ under individual smoothness, and its tuned version recovers $\tilde \O(n + \sqrt{n} \Delta L \epsilon^{-2})$.
Our PAGE instantiation provides parameter-free guarantees under average smoothness while retaining the simplicity of the original PAGE method.
Its tuned version recovers the optimal problem-parameter dependence in both the finite-sum and bounded-variance settings.

\section{Conclusion}
\label{sec:conclusion}
We developed a systematic framework for constructing parameter-free algorithms for smooth nonconvex optimization.
The key idea is to use higher-order regularization, with a regularization exponent larger than the order of the model error.
This choice makes the method robust to misspecification of the regularization parameter and yields complexity guarantees without using line search, trust regions, or other acceptance tests.

We instantiated this principle for gradient descent, Newton's method, the Gauss--Newton method, SGD, and PAGE.
For these methods, the resulting algorithms achieve the optimal or best-known dependence on the target accuracy without requiring prior knowledge of problem-dependent parameters.
When such parameters are known up to constant factors, the proposed tuning also recover the optimal or best-known dependence on these parameters.
These results show that higher-order regularization provides a simple and unified mechanism for designing parameter-free model-based algorithms in both deterministic and stochastic settings.

Several directions remain for future work.
A natural next step is to incorporate acceleration.
Recent parameter-free accelerated methods use line search or related adaptive mechanisms~\citep{marumo2024parameter,marumo2025universal,xiong2026parameter}, and it would be interesting to develop accelerated variants that retain the acceptance-test-free nature of our approach.
Another direction is to extend the framework to broader classes of problems, such as constrained or nonsmooth optimization.

\section*{Statements and Declarations}
\paragraph{Funding.}
This work was partially supported by JSPS KAKENHI (23H03351 and 24K23853) and JST CREST (JPMJCR24Q2).

\paragraph{Competing Interests.}
The authors declare that they have no competing interests.

\paragraph{Data Availability.}
Data availability is not applicable to this article, as no datasets were generated or analyzed during the current study.

\bibliographystyle{abbrvnat_nodoi}
\bibliography{myrefs}

\end{document}